\newtheorem{theorem}{Theorem}[section]
\newtheorem{lemma}[theorem]{Lemma}
\newtheorem{corollary}[theorem]{Corollary}
\newtheorem{notation}[theorem]{Notation}
\newtheorem{definition}[theorem]{Definition}
\title{Proof mining in $L^p$ spaces}
\author{Andrei Sipo\c s${}^{a,b}$\\[0.2cm]
\footnotesize ${}^a$Department of Mathematics, Technische Universit\"at Darmstadt,\\
\footnotesize Schlossgartenstrasse 7, 64289 Darmstadt, Germany\\[0.1cm]
\footnotesize ${}^b$Simion Stoilow Institute of Mathematics of the Romanian Academy,\\
\footnotesize Calea Grivi\c tei 21, 010702 Bucharest, Romania \\[2mm]
\footnotesize E-mail: sipos@mathematik.tu-darmstadt.de
}
\date{}
\begin{document}

\maketitle

\begin{abstract}
We obtain an equivalent implicit characterization of $L^p$ Banach spaces that is amenable to a logical treatment. Using that, we obtain an axiomatization for such spaces into a higher-order logical system, the kind of which is used in proof mining, a research program that aims to obtain the hidden computational content of mathematical proofs using tools from mathematical logic. As an aside, we obtain a concrete way of formalizing $L^p$ spaces in positive-bounded logic. The axiomatization is followed by a corresponding metatheorem in the style of proof mining. We illustrate its use with the derivation for this class of spaces of the standard modulus of uniform convexity.

\noindent 2010 {\it Mathematics Subject Classification}: 03F10; 46B25; 46E30.

\noindent {\em Keywords:} Proof mining; $L^p$ spaces; Uniform convexity; Metatheorems.
\end{abstract}

\section{Introduction}

Since the time of Hilbert and Bernays \cite{HilBer68}, a notable research topic has been the search for the proper way of interfacing logic with analysis. One of the first methods to represent real numbers in a logic in a built-in manner was attempted in the 1960s -- see, e.g., the book on continuous model theory by Chang and Keisler \cite{ChaKei66}. Later, Ben Yaacov and others realized that the lack of fruitful lines of research out of that logic was due to an unfortunate choice of parameters -- specifically, the truth values could vary widely along an arbitrary compact Hausdorff space (instead of just the interval $[0,1]$), while equality itself was tightly restricted to binary values. Their efforts led to what has been called ``continuous first-order logic'', a system in which many celebrated and relatively advanced results of 20th century model theory could be reasonably translated -- see \cite{BenBerHenUsv08} for an introduction. Another strand of developments came from Henson's positive-bounded logic, introduced in \cite{Hen76} and later shown to be largely equivalent to continuous first-order logic. Despite this fact, due to its later exhaustive treatment by Henson and Iovino focusing on the model-theoretic ultraproduct construction \cite{HenIov02}, positive-bounded logic was subject to a thorough investigation from which it resulted that, in combination with the aforementioned ultraproducts, it could be used to prove uniformity results in nonlinear analysis and ergodic theory -- see the recent paper of Avigad and Iovino \cite{AviIov13}.

What interests us here is the other known method of obtaining such results, namely the research program of ``proof mining'' -- a project first suggested by G. Kreisel in the 1950s under the name of ``unwinding of proofs'' and then given maturity by U. Kohlenbach and his collaborators starting in the 1990s. Proof mining aims to analyse existing proofs in branches of ordinary mathematics in order to exhibit their hidden combinatorial and computational content and also to devise general ``metatheorems'' \cite{GerKoh06,Koh05,GerKoh08} that explain when such concrete witnesses or bounds may be extracted from a known proof, conditional on the formalization of the given proof inside some higher-order system of arithmetic. These general results may also specify when some parameters do not partake in the final formula -- hence the uniformity result is implicit in the quantitative one. So far, the research has been largely focused on nonlinear analysis and, naturally, the question of the right way of formalizing metric or normed spaces has been raised. Fortunately, the higher-order nature of the systems with which proof mining works has provided the following fourth solution to the problem: spaces are encoded as separate primitive types out of which the type algebra is constructed and on which axioms like the Banach space ones can be added as simply as in the purely arithmetic situation. A comprehensive introduction to the theory of proof mining and its results up to 2008 is \cite{Koh08}, while recent surveys may be found in \cite{Koh17, Koh19}.

We can now ask the question of whether these proof-theoretic methods are sufficiently powerful to provide us with all uniformity results given to us by the model-theoretic properties of positive-bounded formulas. (Proof theory already had the upper hand in the matter of being able to deal with weak forms of extensionality.) The answer, as presented in the 2016 paper of G\"unzel and Kohlenbach \cite{GunKoh16}, is in the affirmative. To give a rough sketch, the positive-bounded formulas are there translated into a special class of higher-order formulas denoted by $\mathcal{PBL}$, which are then turned into $\Delta$-formulas, a class of formulas which can be freely added as additional axioms, with no negative consequences to the bound extraction procedure, as per the classical metatheorems of proof mining. A new metatheorem is then obtained for the classes of spaces which could be axiomatized by positive-bounded formulas. In addition, the treatment of a ``uniform boundedness principle'' tries to clarify just what exactly is the role played by the ultraproduct construction. Examples are given of such classes of spaces, and the translations for each set of axioms into the higher-order language are given explicitly, together with their metatheorems. Notable among these are the $L^p$ and $BL^pL^q$ Banach lattices, which are usually defined by a construction, but for which axiomatic characterizations into positive-bounded logic have been found, for the last one by Henson and Raynaud \cite{HenRay07}.

The space of $p$-integrable functions on a measure space $(\Omega,\mathcal{F},\mu)$ -- denoted by $L^p(\Omega,\mathcal{F},\mu)$ or simply by $L^p(\mu)$ -- is the Banach space built on the set of all real-valued measurable functions $f$ on $\Omega$ having the property that
$$\int_\Omega |f|^p \mathrm{d}\mu < \infty,$$
a set then factored by the a.e.-equality relation (which makes the canonical seminorm into a norm). It turns out -- see \cite{Lac74,LinTza73} for detailed expositions -- that these spaces can be given an implicit characterization, which resembles a bit the axiomatization of $BL^pL^q$ lattices which was analysed by G\"unzel and Kohlenbach. Notably, and in contrast to that case, our characterization does not use at all the natural lattice structure. What we shall do is to show how it may be modified in order to build from it a logical system that (i) accurately represents the $L^p(\mu)$ spaces relatively to their standard models (Theorem~\ref{sound}); (ii) allows for a bound extraction metatheorem (Theorem~\ref{S3thm}); and (iii) admits an internal proof that the standard modulus of uniform convexity is valid for this class of spaces (Theorem~\ref{uconv}).

We shall draw, in Theorem~\ref{pb}, a similar conclusion to that of \cite{HenRay07}: the class of $L^p$ spaces is axiomatizable in the language of positive-bounded logic within the class of pure Banach spaces, therefore obtaining a new, concrete proof of this classical result which was previously obtained by Krivine \cite{Kri74} using an alternative method of characterization and by Henson \cite{Hen76} using ultraproducts.

The next section runs parallel to the exposition in \cite{Koh08} (as updated by \cite{GunKoh16}) and familiarizes the reader with the basic notions regarding the logical system used and the formulation of metatheorems, adapted for the present situation. Section~\ref{sec:delta} introduces and proves the essential lemmas that we use to obtain our characterization, presents and justifies its translation into the higher-order language and gives the corresponding metatheorem. Finally, Section~\ref{sec:uc} exhibits an illustration of the use of our axiomatization, namely the way in which one can derive the uniform convexity of $L^p$ spaces, for the case $p \geq 2$.

\section{Logical preliminaries}

The most powerful foundational system that has so far been studied from the viewpoint of proof mining is the system $\mathcal{A}^\omega$ of weakly extensional classical analysis in all finite types. It is created by adjoining certain choice principles -- the quantifier-free axiom schema of choice and the axiom schema of dependent choice -- to a base system of higher-order arithmetic, namely weakly extensional classical (Peano) arithmetic in all finite types, a system that is a modification of G\"odel's System {\bf T}, which is equivalent in proof-theoretic power to the first-order theory {\bf PA}. The system $\mathcal{A}^\omega$, in its turn, is of a power comparable to the first-order, two-sorted theory usually denoted by $\text{\bf Z}_\text{\bf 2}$ or ``full second-order arithmetic''. A detailed presentation of it can be found in the monograph \cite{Koh08}. We note in passing that it represents real numbers by functions $\mathbb{N} \to \mathbb{N}$ in such a way that the usual binary relations $=_\mathbb{R}$ and $\leq_\mathbb{R}$ are actually expanded into purely universal formulas, and $<_\mathbb{R}$ into a purely existential one. However, situations like the one happening in the usual definition of a convergent sequence, where ``$<\varepsilon$'' can be readily substituted for ``$\leq\varepsilon$'', occur frequently, this giving us a leeway in minimizing the complexity of the formulas under discussion.

From the way it has been built up, it is immediate that this system admits a G\"odelian functional interpretation in its bar-recursive extension devised by Spector \cite{Spe62}. However, in order to be useful to actual applications, we must do slight modifications of it, as it has been done for the general logical metatheorems of proof mining, developed by Kohlenbach \cite{Koh05} and by Gerhardy and Kohlenbach \cite{GerKoh06,GerKoh08}. We follow in the sequel the exposition of \cite{Koh08}, in order to present the first such extension, $\mathcal{A}^\omega[X,\|\cdot\|]$, which allows us to speak about normed spaces.

The set of types for this system, $\text{{\bf T}}^X$, will be generated by two ``primitive'' types, the type $0$ of natural numbers and a new abstract type $X$, representing elements from our space, forming a free algebra with a single binary operation $\to$, representing function types. (We will write $\tau(\rho)$ for $\rho\to\tau$.) For such a type $\rho$, we define the type $\hat{\rho}$ by replacing all occurences of $X$ in $\rho$ by $0$.

\begin{definition}
Such a type is {\bf small} if it is of the form $\rho\underbrace{(0)\ldots(0)}_{n\text{ times}}$, where $\rho \in \{0,X\}$ and $n \geq 0$.
\end{definition}

\begin{definition}
Such a type is {\bf admissible} if it is of the form $\rho(\tau_n)\ldots(\tau_1)$, where $\rho \in \{0,X\}$, $n \geq 0$ and $\tau_1$, ..., $\tau_n$ are all small.
\end{definition}

Clearly, all small types are admissible.

Also, we add new constants for the various operations common to normed spaces, i.e. $0_X$ and $1_X$ of type $X$, $+_X$ of type $X(X)(X)$, $-_X$ of type $X(X)$, $\cdot_X$ of type $X(X)(1)$ (where $1=0(0)$ is the type of real numbers) and $\|\cdot\|_X$ of type $1(X)$. We allow infix notation and the ``syntactic sugar'' of writing $x -_X y$ for $x +_X (-_X y)$. Finally, we add the following axioms:
\begin{enumerate}
\item the equational, and hence purely universal, axioms for vector spaces;
\item $\forall x^X (\|x-_X x\|_X =_\mathbb{R} 0_\mathbb{R} )$;
\item $\forall x^X y^X (\|x -_X y\|_X =_\mathbb{R} \|y -_X x\|_X)$;
\item $\forall x^Xy^Xz^X (\|x -_X z\|_X \leq_\mathbb{R} \|x-_X y\|_X +_\mathbb{R} \|y-_Xz\|_X)$;
\item $\forall \alpha^1 x^X y^X (\|\alpha x-_X\alpha y\|_X =_\mathbb{R} \|\alpha\|_\mathbb{R} \cdot_\mathbb{R} \|x-_X y\|_X$;
\item $\forall \alpha^1 \beta^1 x^X (\|\alpha x -_X \beta x\|_X =_\mathbb{R} |\alpha -_\mathbb{R} \beta |_\mathbb{R} \cdot_\mathbb{R} \|x\|_X$;
\item $\forall x^X \forall y^X \forall u^X \forall v^X (\|(x+_X y) -_X (u+_X v)\|_X \leq_\mathbb{R} \|x -_X u\|_X +_\mathbb{R} \|y-_X v\|_X)$;
\item $\forall x^X y^X (\|(-_X x) -_X (-_X y)\|_X =_\mathbb{R} \|x-_X y\|_X)$;
\item $\forall x^X y^X (|\|x\|_X -_\mathbb{R} \|y\|_X|_\mathbb{R} \leq_\mathbb{R} \|x-_X y\|_X)$;
\item $\|1_X\|_X =_\mathbb{R} 1_\mathbb{R}$.
\end{enumerate}

Note that the equality relation $x^X =_X y^X$ which is necessarily used in the expression of the vector space axioms is syntactically defined as $\|x -_X y\|_X =_\mathbb{R} 0_\mathbb{R}$. We define the equality for higher types as in the system $\mathcal{A}^\omega$, as extensional equality reducible to $=_0$ and $=_X$.

An issue when adding new constant symbols is their extensionality -- roughly, as the base system admits only a quantifier-free rule of extensionality, it is not clear that for a new function symbol $f$ that is added to the system (e.g. $+_X$ or $-_X$ from the above) one can prove in the new system a statement of the form
$$\forall x_1...\forall x_n\forall y_1...\forall y_n \left(\bigwedge_i x_i = y_i \to f(x_1,...,x_n) = f(y_1,...,y_n)\right)$$
Some axioms above, like the eighth one, are written in this way purely to minimize the effort in writing such an extensionality proof; the rest of them yield it more readily in their classical forms. The result is that all new function symbols are provably extensional. The last axiom was originally added solely to ensure the non-triviality of the formalized space, but, as we shall see later, we can make good use of it in bounding some quantities that will appear in the axioms we propose.

In order to formalize the fact that the space is Banach, i.e. its completeness, the following is done (see \cite[pp. 432-434]{Koh08}). We first note that the following operation on $X$-valued sequences is term-definable in the system:
\[
 \widehat{x_n} :=
  \begin{cases} 
      \hfill x_n, \hfill &  \text{if, for all $k <n$, } [\widehat{d_X(x_k, x_{k+1})}](k+1) <_\mathbb{Q} 6 \cdot 2^{-k-1}\\
      \hfill x_k, \hfill & \text{where $k < n$ is the least such that } [\widehat{d_X(x_k, x_{k+1})}](k+1) \geq_\mathbb{Q} 6 \cdot 2^{-k-1}\\
  \end{cases}
\]
where we have used explicitly the encoding of reals as functions. The operation above transforms a sequence into a Cauchy one of prescribed rate $2^{-n+3}$. We now add a new constant $C$ of type $X(X(0))$, used to assign the limit to such sequences. This is enforced by the following additional axiom:
$$\forall x^{X(0)} \forall k^0 (d_X(C(x), \widehat{x_k}) \leq_\mathbb{R} 2^{-k+3}).$$
We have therefore obtained the system $\mathcal{A}^\omega[X,\|\cdot\|,\mathcal{C}]$, formalizing Banach spaces.

We say that a formula in our language is a $\forall$-formula (resp. an $\exists$-formula) if it is formed by adjoining a list of universal (resp. existential) quantifiers over variables of admissible types to a quantifier-free formula.

Now, if $(X,\|\|)$ is a Banach space, we define a canonical associated set-theoretic model $\mathcal{S}^{\omega,X} = \{\mathcal{S}_\rho\}_{\rho \in \text{{\bf T}}^X}$ in all finite types by putting $\mathcal{S}_0 := \mathbb{N}$, $\mathcal{S}_X := X$ and $\mathcal{S}_{\tau(\rho)} := \mathcal{S}_{\tau}^{\mathcal{S}_\rho}$ (i.e. the set-theoretic Hom-set), assigning to any language constant its standard value, except for $1_X$, which can take any value of norm $1$ -- this is why we said ``a'' set-theoretic model. In order to define e.g. the norm function, one assigns to each real a type-$1$ functional (obviously non-effectively), as described by \cite[Definition 17.7]{Koh08}. Also, we say that a sentence of our logical language is modeled by such a pair $(X,\|\|)$ iff it is satisfied in the usual Tarskian sense by all the possible models associated to it (i.e., regardless of the exact value of $1_X$, which, however, makes the tenth axiom to be satisfied in this sense).

There is another relevant model associated to this kind of logical system. In order to introduce it, we define, for each $\rho \in \text{{\bf T}}^X$, the majorization relation $\gtrsim_\rho \subseteq \mathcal{S}_{\widehat{\rho}} \times \mathcal{S}_\rho$, inductively, as follows:
\begin{align*}
x^* \gtrsim_0 x &:\Leftrightarrow x^* \geq x\\
x^* \gtrsim_X x &:\Leftrightarrow x^* \geq \|x\|\\
x^* \gtrsim_{\tau(\rho)} x &:\Leftrightarrow \forall y^*,y (y^* \gtrsim_\rho y \to x^*y^* \gtrsim_\tau xy)\land \forall y^*,y (y^* \gtrsim_{\widehat{\rho}} y \to x^*y^* \gtrsim_{\widehat{\tau}} x^*y).
\end{align*}
We can now define the model of {\bf hereditarily strongly majorizable functionals}, $\mathcal{M}^{\omega,X} = \{\mathcal{M}_\rho\}_{\rho \in \text{{\bf T}}^X}$, by:
\begin{align*}
\mathcal{M}_0 &:= \mathbb{N} \\
\mathcal{M}_X &:= X \\
\mathcal{M}_{\tau(\rho)} &:= \{x \in \mathcal{M}_\tau^{\mathcal{M}_\rho} \mid \text{exists } x^* \in \mathcal{M}_{\widehat{\tau}}^{\mathcal{M}_{\widehat{\rho}}} \text{ such that } x^* \gtrsim_{\tau(\rho)} x \}
\end{align*}

One of the main uses of this majorizable model arises from the fact that, unlike the standard model, it is a model of bar recursion, which is needed in the current state of the art to interpret the principle of dependent choice. Therefore, the proof of the general logical metatheorems involves some constant juggling between the two models (see \cite[pp. 421-428]{Koh08}). As a consequence, the kind of sentences that one may freely add as axioms will be restricted here not only by the logical complexity, but also by the types involved. Here we see how the admissible types come into play -- for such a type $\rho$, it is the fact (see \cite[Lemma 5.7]{GunKoh16}) that $\mathcal{M}_\rho \subseteq \mathcal{S}_\rho$. This justifies the following definition.

\begin{definition}
We say that a formula in our system is a {\bf $\Delta$-sentence} if it is of the following form:
$$\forall\underline{a}^{\underline{\delta}} \exists \underline{b}^{\underline{\sigma}} \preceq_{\underline{\sigma}} \underline{r} \underline{a} \forall \underline{c}^{\underline{\gamma}} B_0(\underline{a},\underline{b},\underline{c}),$$
where underlined letters represent tuples of variables or types, $B_0$ is quantifier-free and devoid of any additional variables, $\underline{r}$ is a term tuple of the appropriate type, $\underline{\delta}$, $\underline{\sigma}$, $\underline{\gamma}$ are tuples of admissible types, and $\preceq$ is syntactic sugar for the following family of binary relations:
\begin{align*}
x \preceq_0 y &:\equiv x \leq y \\
x \preceq_X y &:\equiv \|x\| \leq \|y\| \\
x \preceq_{\tau(\rho)} y &:\equiv \forall z^\rho (x(z) \preceq_\tau y(z))
\end{align*}
\end{definition}

\begin{definition}
The {\bf Skolem normal form} of a $\Delta$-sentence written as above is:
$$\exists \underline{B}^{\underline{\sigma}(\underline{\delta})} \preceq_{\underline{\sigma}(\underline{\delta})} \underline{r} \forall \underline{a}^{\underline{\delta}} \forall \underline{c}^{\underline{\gamma}} B_0(\underline{a}, \underline{B}\underline{a}, \underline{c}) $$
\end{definition}

\begin{notation}
If $\Delta$ is a set of $\Delta$-sentences, we denote by $\widetilde{\Delta}$ the set of the Skolem normal forms of the sentences in the set $\Delta$.
\end{notation}

\begin{theorem}[{\cite[Lemma 5.11]{GunKoh16}}]
Let $(X,\|\|)$ be a Banach space, $\mathcal{S}^{\omega,X}$ and $\mathcal{M}^{\omega,X}$ be models associated with it as above. Let $\Delta$ be a set of $\Delta$-sentences. Suppose that $\mathcal{S}^{\omega,X} \models \Delta$. Then $\mathcal{M}^{\omega,X} \models \widetilde{\Delta}$.
\end{theorem}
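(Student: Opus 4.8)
The plan is to treat the $\Delta$-sentences one at a time, so fix $A \equiv \forall\underline{a}^{\underline{\delta}} \exists \underline{b}^{\underline{\sigma}} \preceq_{\underline{\sigma}} \underline{r}\underline{a}\, \forall \underline{c}^{\underline{\gamma}} B_0(\underline{a},\underline{b},\underline{c})$ with $\mathcal{S}^{\omega,X} \models A$, and aim to show that its Skolem normal form $\widetilde{A} \equiv \exists \underline{B} \preceq_{\underline{\sigma}(\underline{\delta})} \underline{r}\, \forall\underline{a}\forall\underline{c}\, B_0(\underline{a}, \underline{B}\underline{a}, \underline{c})$ holds in $\mathcal{M}^{\omega,X}$. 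The whole argument rests on one compatibility lemma that I would isolate first: for every type $\sigma$, if $u \preceq_\sigma v$ and $v^* \gtrsim_\sigma v$, then $v^* \gtrsim_\sigma u$. I would prove this by induction on $\sigma$; the base cases $0$ and $X$ are immediate from $u \leq v \leq v^*$ and from $\|u\| \leq \|v\| \leq v^*$ respectively, and at a type $\tau(\rho)$ the first conjunct of $v^* \gtrsim_{\tau(\rho)} u$ follows by feeding a pair $z^* \gtrsim_\rho z$ through both hypotheses and invoking the induction hypothesis on $\tau$, while the second conjunct is literally the self-majorization clause of $v^* \gtrsim_{\tau(\rho)} v$ and so is inherited unchanged.

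To build the witness, I would first use that $\underline{r}$ is a closed term of the system and hence, by the standard majorizability of terms, carries a strong majorant $\underline{r}^* \gtrsim_{\underline{\sigma}(\underline{\delta})} \underline{r}$ living in $\mathcal{M}$. Next, since $\underline{\delta}$ is a tuple of admissible types we have $\mathcal{M}_{\underline{\delta}} \subseteq \mathcal{S}_{\underline{\delta}}$, so every $\underline{a} \in \mathcal{M}_{\underline{\delta}}$ is a legitimate instance of the universal quantifier of $A$ in $\mathcal{S}^{\omega,X}$; appealing to choice in the set-theoretic metatheory I would select, for each such $\underline{a}$, a witness $\underline{B}(\underline{a}) \in \mathcal{S}_{\underline{\sigma}}$ with $\underline{B}(\underline{a}) \preceq_{\underline{\sigma}} \underline{r}\underline{a}$ and $\forall \underline{c} \in \mathcal{S}_{\underline{\gamma}}\, B_0(\underline{a}, \underline{B}(\underline{a}), \underline{c})$. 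This fixes a function $\underline{B}$ and, by the very form of the selection, guarantees $\underline{B} \preceq_{\underline{\sigma}(\underline{\delta})} \underline{r}$, which is the bound demanded by $\widetilde{A}$.

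The crux is to check that $\underline{B} \in \mathcal{M}_{\underline{\sigma}(\underline{\delta})}$, i.e. that the arbitrarily chosen Skolem function is hereditarily strongly majorizable, and here $\underline{r}^*$ is exactly the majorant to use. For the first conjunct of $\underline{r}^* \gtrsim_{\underline{\sigma}(\underline{\delta})} \underline{B}$, given $\underline{a}^* \gtrsim_{\underline{\delta}} \underline{a}$ one has $\underline{r}^* \underline{a}^* \gtrsim_{\underline{\sigma}} \underline{r}\underline{a}$ from $\underline{r}^* \gtrsim \underline{r}$, together with $\underline{B}(\underline{a}) \preceq_{\underline{\sigma}} \underline{r}\underline{a}$ by construction, so the compatibility lemma yields $\underline{r}^* \underline{a}^* \gtrsim_{\underline{\sigma}} \underline{B}(\underline{a})$; the second conjunct only constrains $\underline{r}^*$ and is inherited verbatim from $\underline{r}^* \gtrsim \underline{r}$. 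It is precisely this feature, that strong majorization asks nothing of the self-monotonicity of the majorized object, that lets a non-canonical choice function still sit inside $\mathcal{M}$, and I expect this to be the one genuinely delicate point. The same compatibility argument, applied pointwise with a majorant of $\underline{r}\underline{a}$, also shows each value $\underline{B}(\underline{a})$ already lies in $\mathcal{M}_{\underline{\sigma}}$, so $\underline{B}$ maps $\mathcal{M}_{\underline{\delta}}$ into $\mathcal{M}_{\underline{\sigma}}$ as required. Finally, to see that the matrix survives the passage to $\mathcal{M}$, I would note that for $\underline{a} \in \mathcal{M}_{\underline{\delta}}$ and $\underline{c} \in \mathcal{M}_{\underline{\gamma}}$ all of $\underline{a}, \underline{c}, \underline{B}(\underline{a})$ lie in the corresponding $\mathcal{S}$-universes (using $\mathcal{M}_\rho \subseteq \mathcal{S}_\rho$ at admissible types), that the constants are interpreted identically in the two models, and that $B_0$ is quantifier-free; hence its truth value is absolute and $B_0(\underline{a}, \underline{B}(\underline{a}), \underline{c})$, known to hold in $\mathcal{S}^{\omega,X}$, holds in $\mathcal{M}^{\omega,X}$ as well. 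Running this for every sentence of $\Delta$ gives $\mathcal{M}^{\omega,X} \models \widetilde{\Delta}$.
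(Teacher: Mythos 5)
Your proposal is correct, and since the paper gives no proof of this theorem at all --- it is imported verbatim from \cite[Lemma 5.11]{GunKoh16} --- the right comparison is with that source, whose argument you have reconstructed essentially exactly: Skolemization by choice over $\mathcal{M}_{\underline{\delta}} \subseteq \mathcal{S}_{\underline{\delta}}$ (admissibility), the compatibility lemma that $u \preceq_\sigma v$ and $v^* \gtrsim_\sigma v$ imply $v^* \gtrsim_\sigma u$, majorization of the closed terms $\underline{r}$, the observation that the second clause of $\gtrsim$ constrains only the majorant (so an arbitrary choice function sitting below a majorizable bound is itself strongly majorizable --- indeed the delicate point, as you say), and absoluteness of the quantifier-free matrix between the two models at admissible types. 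The one step you gloss is why a merely majorized element of $\mathcal{S}_{\underline{\sigma}}$ is literally a member of $\mathcal{M}_{\underline{\sigma}}$: this needs $\mathcal{M}_\tau = \mathcal{S}_\tau$ at the small argument types of an admissible $\underline{\sigma}$ (every functional of small type is majorizable, e.g.\ via pointwise maxima over initial segments), a one-line supplement rather than a gap.
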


The following result is the appropriate modification for our case of \cite[Theorem 5.13 and Corollary 5.14]{GunKoh16}.

\begin{theorem}[{Logical metatheorem for Banach spaces endowed with additional $\Delta$-axioms}]\label{S2thm}\ \\
Let $\rho \in \text{{\bf T}}^X$ be an admissible type. Let $B_\forall(x,u)$ be a $\forall$-formula with at most $x,u$ free and $C_\exists(x,v)$ an $\exists$-formula with at most $x,v$ free. Let $\Delta$ be a set of $\Delta$-sentences. Suppose that:
$$\mathcal{A}^\omega[X,\|\cdot\|,\mathcal{C}] + \Delta \vdash \forall x^\rho (\forall u^0 B_\forall(x,u) \to \exists v^0 C_\exists(x,v)).$$
Then one can extract a partial functional $\Phi : S_{\widehat{\rho}} \rightharpoonup \mathbb{N}$, whose restriction to the strongly majorizable functionals of $S_{\widehat{\rho}}$ is a bar-recursively computable functional of $\mathcal{M}^\omega$, such that for all Banach spaces $(X,\|\|)$ having the property that any associated set-theoretic model of it satisfies $\Delta$, we have that for all $x \in S_\rho$ and $x^* \in S_{\widehat{\rho}}$ such that $x^* \gtrsim_\rho x$, the following holds:
$$\forall u \leq \Phi(x^*) B_\forall(x,u) \to \exists v \leq \Phi(x^*) C_\exists(x,v).$$
In addition:
\begin{enumerate}[label=\arabic*.]
\item If $\widehat{\rho}$ is equal to $1$, then $\Phi$ is total.
\item All variables may occur as finite tuples satisfying the same restrictions.
\item If the proof in the system above proceeds without the use of the axiom of dependent choice, one can use solely the set-theoretical model $\mathcal{S}^{\omega, X}$, without any restriction to the majorizable functionals, and $\Phi$ is then a total computable functional which is higher-order (i.e. in the sense of G\"odel) primitive recursive. Also, the additional restriction imposed on $\rho$ is no longer necessary.
\end{enumerate}
\end{theorem}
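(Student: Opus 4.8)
The plan is to follow the standard proof-mining template for bound extraction, specializing the general argument of G\"unzel and Kohlenbach \cite{GunKoh16} (their Theorem 5.13 and Corollary 5.14) to the concrete system $\mathcal{A}^\omega[X,\|\cdot\|,\mathcal{C}]$ and checking that the particular constants and axioms we have adjoined cause no trouble. First I would observe that all the normed-space axioms (1)--(10), as well as the completeness axiom governing the constant $C$, are $\forall$-formulas; being purely universal, they coincide with their own (monotone) functional interpretation and are simply verified in the model, so they contribute no computational content to the extraction. The one point demanding genuine care is that the new constant $C$ must be majorizable — its majorant is term-definable from the construction recalled on pp.~432--434 of \cite{Koh08} — which guarantees that the whole Banach-space structure stays inside the majorizable model $\mathcal{M}^{\omega,X}$.

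Next I would assimilate the added axioms $\Delta$. By the preceding theorem, the hypothesis $\mathcal{S}^{\omega,X}\models\Delta$ yields $\mathcal{M}^{\omega,X}\models\widetilde{\Delta}$, so the Skolem normal forms may be treated as valid axioms over the majorizable model. The decisive feature is that each such form is $\exists\underline{B}\preceq_{\underline{\sigma}(\underline{\delta})}\underline{r}\,\forall\underline{a}\,\forall\underline{c}\,B_0(\underline{a},\underline{B}\underline{a},\underline{c})$: its functional interpretation is witnessed by the Skolem functionals $\underline{B}$ themselves, and these are $\preceq$-bounded — hence majorized — by the fixed term tuple $\underline{r}$. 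Consequently $\widetilde{\Delta}$ contributes only majorizable functionals given by closed terms, adding nothing unbounded to the bound that will eventually be extracted. This is precisely where the restriction to admissible types $\underline{\delta},\underline{\sigma},\underline{\gamma}$ and the $\preceq$-bounding of the existential block are used.

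With the axioms under control, I would apply a negative translation (the system is classical) followed by Spector's bar-recursive monotone functional interpretation to the assumed derivation of $\forall x^\rho(\forall u^0 B_\forall(x,u)\to\exists v^0 C_\exists(x,v))$. The quantifier-free axiom of choice is interpreted directly and dependent choice through bar recursion, which is why the realizing terms live in the bar-recursive calculus and are read off in $\mathcal{M}^\omega$. Since $B_\forall$ is a $\forall$-formula and $C_\exists$ an $\exists$-formula, their interpretations are simple enough that a single majorizing term $\Phi$, depending only on a majorant $x^*\gtrsim_\rho x$, simultaneously bounds the witness for $v$ and the range of $u$ that must be inspected in the premise; this yields exactly the stated implication $\forall u\le\Phi(x^*)\,B_\forall(x,u)\to\exists v\le\Phi(x^*)\,C_\exists(x,v)$. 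Transporting the conclusion back from $\mathcal{M}^{\omega,X}$ to the intended standard model then uses the cited inclusion $\mathcal{M}_\rho\subseteq\mathcal{S}_\rho$ for admissible $\rho$.

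Finally I would read off the additional clauses. The functional $\Phi$ is a priori partial on $\mathcal{S}_{\widehat{\rho}}$ and bar-recursively computable on its majorizable part; when $\widehat{\rho}$ equals $1$ it becomes total, since every element of $\mathbb{N}^{\mathbb{N}}$ admits a monotone majorant. The extension to tuples is routine. If the derivation avoids dependent choice, bar recursion is not needed, the realizers may be interpreted in the full set-theoretic model $\mathcal{S}^{\omega,X}$ without restricting to majorizable functionals, $\Phi$ becomes a total G\"odel-primitive-recursive functional, and the admissibility restriction on $\rho$ may be dropped. The main obstacle I anticipate is not any single computation but the bookkeeping of feeding the Skolemized $\Delta$-axioms into the verifying bar-recursive system without corrupting the monotone functional interpretation — that is, confirming that their realizers are genuinely majorized by the given terms $\underline{r}$ — together with the careful juggling between $\mathcal{M}^{\omega,X}$ and $\mathcal{S}^{\omega,X}$ forced by the use of bar recursion.
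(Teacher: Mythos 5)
Your proposal is correct and takes essentially the same approach as the paper: the paper in fact offers no proof of this theorem at all, presenting it only as ``the appropriate modification for our case'' of Theorem 5.13 and Corollary 5.14 of G\"unzel--Kohlenbach, and your sketch faithfully reconstructs exactly that cited argument (purely universal space axioms plus majorization of $\mathcal{C}$, Skolemization of the $\Delta$-axioms with majorants read off from $\underline{r}$, negative translation followed by Spector's bar-recursive monotone functional interpretation, and the transfer between $\mathcal{M}^{\omega,X}$ and $\mathcal{S}^{\omega,X}$ via the inclusion $\mathcal{M}_\rho \subseteq \mathcal{S}_\rho$ at admissible types). The only compression worth flagging is your phrase ``$\preceq$-bounded --- hence majorized --- by $\underline{r}$'': $\preceq$-boundedness does not literally coincide with $\gtrsim$-majorization, and one needs the standard lemma of the cited proof that converts $\preceq$-bounds at admissible types into majorants constructed from majorants of the closed terms $\underline{r}$.
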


\section{\texorpdfstring{The $\Delta$-axiomatization of $L^p(\mu)$ Banach spaces}{The axiomatization of Lp Banach spaces}}\label{sec:delta}

The goal of this section is to describe an extension of the theory in the previous section, one that can formalize the concept of an $L^p(\mu)$ Banach space. Since such spaces are usually defined explicitly, as equivalence classes of $p$-integrable real-valued functions on a measurable space, it is clear that an implicit characterization is needed. Such a characterization in terms of the natural lattice structure of $L^p(\mu)$ spaces was used in \cite{GunKoh16} in order to provide a logical metatheorem for this class of spaces. For our use, however, it is more helpful to use the following characterization, for which references are \cite{LinTza73, Lac74} and which uses solely the Banach space structure. In the sequel, we shall denote by $\mathbb{R}^n_p$ the Euclidean vector space $\mathbb{R}^n$ endowed with the standard $p$-norm.

\begin{definition}
Let $X$ and $Y$ be linearly isomorphic Banach spaces. The {\bf Banach-Mazur distance} between $X$ and $Y$ is
$$d(X,Y) := \inf \{ \|T\| \|T^{-1}\| \mid T \text{ is a linear isomorphism between $X$ and $Y$}\}.$$
\end{definition}

\begin{definition}
Let $p$, $\lambda > 1$. We say that a Banach space $X$ is an {\bf $\mathcal{L}_{p,\lambda}$ space} if for each finite dimensional subspace $Y$ of $X$ there exists a finite dimensional subspace $Z$ of $X$ such that $Y \subseteq Z$ and $d(Z,\mathbb{R}^{\dim_\mathbb{R} Z}_p) \leq \lambda$.
\end{definition}

\begin{theorem}[{\cite{LinPel68, Tza69}}]\label{Banach-char}
Let $p > 1$. A Banach space $X$ is isometric to some $L_p(\mu)$ space iff for all $\varepsilon > 0$, $X$ is an $\mathcal{L}_{p,1+\varepsilon}$ space.
\end{theorem}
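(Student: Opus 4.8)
The plan is to prove the two implications separately, the forward one being essentially elementary and the reverse one being the genuinely deep part of the statement.

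For the forward direction, suppose $X$ is isometric to some $L^p(\mu)$ and fix $\varepsilon > 0$ and a finite dimensional subspace $Y \subseteq X$. I would exploit the density of simple functions together with the conditional expectation operators: for a finite sub-$\sigma$-algebra $\mathcal{G}$ generated by a measurable partition $\{A_1,\dots,A_n\}$, the conditional expectation $E_\mathcal{G}$ is a norm-one projection of $L^p(\mu)$ whose range is \emph{exactly} isometric to $\mathbb{R}^n_p$, since $\|\sum_i c_i \mathbbm{1}_{A_i}\|_p^p = \sum_i |c_i|^p \mu(A_i)$ and one rescales each coordinate by $\mu(A_i)^{1/p}$. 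Choosing the partition fine enough makes $\|(\mathrm{id} - E_\mathcal{G})|_Y\|$ arbitrarily small, and a standard perturbation argument then produces a finite dimensional $Z$ with $Y \subseteq Z$ and $d(Z, \mathbb{R}^{\dim Z}_p) \leq 1 + \varepsilon$. Hence $X$ is an $\mathcal{L}_{p,1+\varepsilon}$ space for every $\varepsilon > 0$.

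The reverse direction is where the main work lies: assuming that $X$ is an $\mathcal{L}_{p,1+\varepsilon}$ space for every $\varepsilon > 0$, I must manufacture a measure $\mu$ and an \emph{exact} isometry $X \cong L^p(\mu)$. The approach I would take is the ultraproduct one, which is also the most natural given the model-theoretic backdrop of the paper. First, from the hypothesis I would extract a family $\{Z_\alpha\}$ of finite dimensional subspaces of $X$, directed by inclusion and with dense union, together with isomorphisms $T_\alpha : Z_\alpha \to \mathbb{R}^{n_\alpha}_p$ satisfying $\|T_\alpha\|\,\|T_\alpha^{-1}\| \to 1$; the crucial point is that the definition of $\mathcal{L}_{p,1+\varepsilon}$ supplies not merely a good approximation of each $Y$ but an \emph{enlargement} $Y \subseteq Z$, so the good subspaces form a directed system rather than an unrelated collection. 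Next I would form the ultraproduct $\left(\prod_\alpha \mathbb{R}^{n_\alpha}_p\right)_{\mathcal{U}}$ along a suitable ultrafilter; by the classical fact that an ultraproduct of $L^p$ spaces is again isometric to an $L^p$ space, this ultraproduct is some $L^p(\nu)$. The almost-isometries $T_\alpha$ then assemble, in the ultralimit, into an \emph{exact} isometric embedding of $X$ into $L^p(\nu)$ — this is precisely the step in which the $1 + \varepsilon$ errors are annihilated, which is the whole reason for passing to an ultraproduct.

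The remaining, and hardest, obstacle is that an isometric subspace of an $L^p$ space need not itself be an $L^p$ space, so embedding $X$ into $L^p(\nu)$ is not yet enough. Here the directed-inclusion structure must be exploited a second time: the conditional-expectation-type norm-one projections attached to the inclusions $Z_\alpha \subseteq Z_\beta$ should pass to the limit and exhibit $X$ as the range of a norm-one (contractive) projection of $L^p(\nu)$. I would then invoke the structure theorem for such ranges — the contractive-projection results of Ando and Tzafriri — which asserts that the range of a norm-one projection on an $L^p$ space is itself linearly isometric to some $L^p(\mu)$, yielding the desired conclusion $X \cong L^p(\mu)$. I expect the verification that these projections are genuinely contractive in the limit, together with the bookkeeping needed to realize $X$ (and not merely a space finitely representable in it) as the honest range of such a projection, to be the technical heart of the argument; it is exactly for this reason that the statement rests on the substantial work of Lindenstrauss–Pełczyński and Tzafriri rather than being proved from scratch.
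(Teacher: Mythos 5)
Your forward direction matches what the paper actually does. Note that the paper never proves this theorem in full: it is quoted from the cited literature, and the only part the paper establishes itself is the ``only if'' direction, via Lemmas~\ref{l1}--\ref{l2-new} plus the ``commutativity of approximation'' perturbation argument cited from Lindenstrauss--Tzafriri. Your conditional expectation $E_\mathcal{G}$ is exactly the paper's explicit construction (the range of $E_\mathcal{G}$ is the span of the disjointly supported indicators $\mathbbm{1}_{B_l}$, rescaled by $\mu(B_l)^{1/p}$, which is the paper's isometric copy of $\mathbb{R}^{\dim C}_p$), and the ``standard perturbation argument'' you invoke is precisely the nontrivial cited step that upgrades ``$Y$ is close to a subspace of an isometric copy of $\mathbb{R}^n_p$'' to ``some $Z \supseteq Y$ satisfies $d(Z,\mathbb{R}^{\dim Z}_p) \leq 1+\varepsilon$'' (one conjugates by a small automorphism $S = \mathrm{id} + \sum_j f_j(\cdot)(y_j - w_j)$ built from functionals biorthogonal to an approximating system for a basis of $Y$). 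So that half is correct and in the same spirit as the paper.

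In the reverse direction your architecture (ultraproduct of the $\mathbb{R}^{n_\alpha}_p$ plus the Ando--Tzafriri contractive projection theorem) is the right one, but the step that produces the contractive projection has a genuine gap. There are no ``conditional-expectation-type norm-one projections attached to the inclusions $Z_\alpha \subseteq Z_\beta$'': transported by $T_\beta$, the subspace $Z_\alpha$ sits inside $\mathbb{R}^{n_\beta}_p$ as a skew, merely almost-isometric copy of $\mathbb{R}^{n_\alpha}_p$, not as a sublattice spanned by disjointly supported vectors, and the assertion that such copies are (almost) norm-one complemented is itself a nontrivial theorem about isometries of $L^p$ spaces --- it cannot be taken for granted, and its almost-isometric version is delicate. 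The standard repair avoids these projections entirely. Once you have the isometric embedding $J : X \to \hat{X} := \left(\prod_\alpha \mathbb{R}^{n_\alpha}_p\right)_\mathcal{U} \cong L^p(\nu)$, observe that $X$ is reflexive, being a closed subspace of the reflexive space $L^p(\nu)$; this is exactly where the hypothesis $p > 1$ enters, and the fact that your sketch never uses $p>1$ is the telltale sign that this step is missing (for $p=1$ a different, bidual-based argument is required). Reflexivity lets you define $P([(v_\alpha)_\alpha])$ as the weak limit along $\mathcal{U}$ of the bounded net $T_\alpha^{-1} v_\alpha$ in $X$: weak lower semicontinuity of the norm together with $\|T_\alpha^{-1}\| \to 1$ gives $\|P\| \leq 1$, and density of $\bigcup_\alpha Z_\alpha$ in $X$ gives $P \circ J = \mathrm{id}_X$. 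Then $J \circ P$ is a norm-one projection of $\hat{X}$ onto $J(X)$, and Ando--Tzafriri finishes the proof.
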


In order to obtain an axiomatization that is amenable to a logical treatment in the spirit of the previous section, we shall modify the above characterization in order to add more quantitative information. The first, crucial step is the following lemma.

\begin{lemma}\label{l1}
Let $X$ be the $L^p$ space on a measure space $(\Omega, \mathcal{F}, \mu)$. Then, for all $x_1$,..., $x_n$ in $X$ of norm at most $1$, and for all $N \in \mathbb{N}_{\geq 1}$, there is a subspace $C \subseteq X$ and $y_1$,..., $y_n$ in $C$ such that $C$ is of dimension at most $(2nN + 1)^n$, it is isometric to $\mathbb{R}^{\dim_\mathbb{R} C}_p$ and for all $i$, $\|x_i - y_i\| \leq \frac1N$.
\end{lemma}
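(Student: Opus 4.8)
The plan is to build $C$ \emph{not} as a span of indicator functions -- which would force us to discretise the \emph{magnitude} of the $x_i$, something that cannot be done with a bounded number of pieces, since the values of an $L^p$ function may spread over arbitrarily many scales -- but as a span of finitely many \emph{disjointly supported} functions whose magnitudes are left free and of which only the \emph{direction} of the vector $\vec{x} = (x_1, \dots, x_n)$ is quantised. The gain is that directions live in a compact set, so a number of cells depending only on $n$ and $N$ will suffice, whatever the distribution of the magnitudes. This is the shape of the argument adapted from \cite[Proposition 3.7]{HenRay07}.

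Concretely, I would first set $m := \max_i |x_i| = \|\vec x\|_\infty$, a measurable function on $\Omega$, and on the set $\{m > 0\}$ consider the normalised vector $\vec x / m$, whose coordinates all lie in $[-1,1]$. Rounding each of its $n$ coordinates to the nearest multiple of $1/(nN)$ produces a measurable map from $\{m>0\}$ into the lattice $\{-nN, \dots, nN\}^n / (nN)$, which has exactly $(2nN+1)^n$ points. Letting $B_1, \dots, B_d$ be the nonempty, positive-measure preimages of these lattice points and discarding $\{m=0\}$ (on which I put $y_i := 0$), I obtain a partition into $d \le (2nN+1)^n$ blocks. On each $B_j$ I take the profile $f_j := m \cdot \mathbf{1}_{B_j}$ and define $y_i := \sum_j r^{(j)}_i f_j$, where $r^{(j)}_i$ is the $i$-th coordinate of the lattice point attached to $B_j$, so that $y_i$ agrees with $r^{(j)}_i \cdot m$ on $B_j$. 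Since the $f_j$ have pairwise disjoint supports, one has $\|\sum_j c_j f_j\|^p = \sum_j |c_j|^p \|f_j\|^p$, so after normalising the nonzero $f_j$ the space $C := \operatorname{span}\{f_j\}$ is isometric to $\mathbb{R}^{\dim_\mathbb{R} C}_p$ with $\dim_\mathbb{R} C = d \le (2nN+1)^n$, and by construction every $y_i \in C$; note also that each $f_j \in L^p$ automatically, since $\|f_j\|^p = \int_{B_j} m^p \le \int_\Omega m^p < \infty$.

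It remains to verify the approximation bound, which is where the hypothesis $\|x_i\| < 1$ enters. On $B_j$ the rounding error in each coordinate is at most $1/(2nN)$, so $|x_i - y_i| = |(\vec x/m)_i - r^{(j)}_i|\, m \le \frac{1}{2nN}\, m$ there; integrating and summing over the blocks gives
\[
\|x_i - y_i\|^p \le \left(\tfrac{1}{2nN}\right)^p \int_\Omega \|\vec x\|_\infty^p \le \left(\tfrac{1}{2nN}\right)^p \sum_{k=1}^n \|x_k\|^p < \left(\tfrac{1}{2nN}\right)^p n = \frac{1}{2^p\, n^{p-1}\, N^p} \le \frac{1}{N^p},
\]
the last inequality holding for every $p > 1$ and $n \ge 1$. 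Hence $\|x_i - y_i\| < 1/N$ for each $i$, as required.

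I expect the only genuine obstacle to be the conceptual one already flagged: realising that the magnitude must \emph{not} be discretised, and that general disjointly supported profiles $f_j$ (rather than plain indicators) are exactly what let the span stay isometric to $\mathbb{R}^{\dim_\mathbb{R} C}_p$ while absorbing the unbounded magnitudes. Once the partition is taken on the direction of $\vec x$, both the counting (the $(2nN+1)^n$ lattice points) and the error estimate are immediate, and the precise choice of mesh $1/(nN)$ is dictated by the factor $n$ entering through $\int_\Omega \|\vec x\|_\infty^p \le \sum_k \|x_k\|^p < n$. The remaining bookkeeping -- measurability of the blocks, and discarding null or zero-profile cells (which only lowers the dimension) -- is routine.
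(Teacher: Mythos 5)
Your proof is correct and follows essentially the same approach as the paper's: both quantize the pointwise \emph{direction} of the vector $(x_1,\dots,x_n)$ relative to a scalar normalizing function into at most $(2nN+1)^n$ measurable cells, take disjointly supported profiles on these cells (whose span is then isometric to $\mathbb{R}^{\dim_\mathbb{R} C}_p$), and read off the approximants $y_i$ as lattice combinations of the profiles. The only differences are cosmetic: the paper normalizes by $\varphi = \sum_{j=1}^n |x_j|$, rounds magnitudes down while tracking signs separately, and bounds $\|x_i - y_i\| \leq \frac{1}{nN}\|\varphi\| \leq \frac{1}{N}$ by the triangle inequality, whereas you normalize by $\max_j |x_j|$, round signed coordinates to the nearest lattice point, and integrate the pointwise bound directly.
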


The above lemma, although with a bound having a more convoluted expression, may be obtained as an immediate consequence of \cite[Theorem 2.1]{PelRos74}. Later in that same paper (see \cite[p. 269]{PelRos74}) a proof which yields a bound of $(2n(n+1)N)^n$ is briefly sketched. In order to obtain our bound, we present the following simplified proof which uses an argument adapted from the proof of \cite[Proposition 3.7]{HenRay07}.

\begin{proof}[{Proof of Lemma~\ref{l1}}]
For any $f : \Omega \to \mathbb{R}$, we denote by $|f| : \Omega \to \mathbb{R}$ the function defined, for all $\omega \in \Omega$, by $|f|(\omega) := |f(\omega)|$.

We fix from the beginning some representatives for $x_1$,..., $x_n$, denoting them by the same designators, and we note that all constructions below will be well-defined w.r.t. the a.e.-equality equivalence relation. We set $\varphi:=\sum_{j=1}^n |x_j|$ and, for each $i \in \{1,\ldots,n\}$ and $k \in \{0,\ldots,nN-1\}$:
$$A_{i,k}:=\left\{\omega \in \Omega \mid \frac{k}{nN}\varphi(\omega) < |x_i(\omega)| \leq \frac{k+1}{nN}\varphi(\omega)\right\},$$
$$A_{i,k,+} := \{\omega \in A_{i,k} \mid x_i(\omega) > 0\}, \quad A_{i,k,-} := \{\omega \in A_{i,k} \mid x_i(\omega) < 0\},$$
$$A_{i,\otimes} := \{\omega \in \Omega \mid x_i(\omega) = 0\}.$$

Clearly, for all $i$, we have that $\Omega = \bigcup_{k=0}^{nN-1} (A_{i,k,+} \cup A_{i,k,-}) \cup A_{i, \otimes}$ and this is a disjoint union in all of its components. 

For each $i$, put $y_i := \sum_{k=0}^{nN-1} \frac{k}{nN} (\mathbbm{1}_{A_{i,k,+}}-\mathbbm{1}_{A_{i,k,-}}) \cdot \varphi$. Let $i \in \{1,\ldots,n\}$ and $\omega \in \Omega$ be such that $x_i(\omega) > 0$. Then, by the above, there is a unique $k_0$ such that $\omega \in A_{i,k_0,+}$ and there is not any $k$ such that $\omega \in A_{i,k,-}$. Therefore, $y_i(\omega) = \frac{k_0}{nN} \cdot \varphi(\omega)$. As $\omega \in A_{i,k_0,+}$, $x_i (\omega) \leq \frac{k_0+1}{nN}\varphi(\omega)$, so $x_i (\omega) - y_i(\omega) \leq \frac{\varphi(\omega)}{nN}$. Since we also have that $x_i (\omega) > \frac{k_0}{nN}\varphi(\omega) = y_i(\omega)$ (so $x_i(\omega) - y_i(\omega)>0$), we get that $|x_i(\omega) - y_i(\omega)|\leq \frac{\varphi(\omega)}{nN}$. Analogously, we might prove this result for $x_i(\omega) = 0$ and $x_i(\omega) < 0$. We have therefore established that for all $i$, $|x_i - y_i| \leq \frac1{nN} \cdot \varphi$. From that we get that for all $i$,
$$\|x_i - y_i \| \leq \frac1{nN} \cdot \|\varphi\| \leq \frac1{nN} \sum_{j=1}^n\|x_j\| \leq \frac1N.$$

Returning to the disjoint union from before, we remark that, for different $i$'s, those sets might overlap. Therefore, for each $l : \{1,\ldots,n\} \to ((\{0,\ldots,nN-1\} \times \{+,-\}) \cup \{\otimes\})$, set:
$$B_l := \bigcap_{i=1}^n A_{i,l(i)}$$
so
$$\Omega = \bigcup_l B_l$$
is a disjoint union. For each such $l$, of which there are $(2nN + 1)^n$, set now:
$$z_l := \mathbbm{1}_{B_l} \cdot \varphi.$$
We have, then, for each $i$, that:
\begin{align*}
y_i &= \sum_{k=0}^{nN-1} \frac{k}{nN} (\mathbbm{1}_{A_{i,k,+}}-\mathbbm{1}_{A_{i,k,-}}) \cdot \varphi \\
&= \sum_{k=0}^{nN-1} \frac{k}{nN} \left(\sum_{l(i)=(k,+)}\mathbbm{1}_{B_l}-\sum_{l(i)=(k,-)}\mathbbm{1}_{B_l}\right) \cdot \varphi \\
&= \sum_{k=0}^{nN-1} \frac{k}{nN} \left(\sum_{l(i)=(k,+)}z_l - \sum_{l(i)=(k,-)}z_l\right),
\end{align*}
i.e. a linear combination of $z_l$'s.

Let $D$ be the set of all $l$'s such that $z_l \neq 0$. We take $C$ to be the space spanned by all the $z_l$'s with $l \in D$. It clearly contains, by the above, all the $y_i$'s and is of dimension at most (actually, equal, as we shall see) the cardinality of $D$, which is in turn at most $(2nN + 1)^n$. It remains to show that it is isometric to $\mathbb{R}^D_p$. If $l \in D$, then:
$$0 \neq \|z_l\| = \left(\int_\Omega |z_l|^p \mathrm{d}\mu\right)^{\frac1p} =  \left(\int_{B_l} |\varphi|^p \mathrm{d}\mu\right)^{\frac1p}.$$
We can now show that the linear map $f:\mathbb{R}^D_p \to C$, defined on the standard basis vectors by $f(e_l):=\frac1{\|z_l\|}\cdot z_l$ is an isometry. Let $v \in \mathbb{R}^D_p$, so there exist $(\lambda_l)_{l \in D}$ such that $v = \sum_{l \in D} \lambda_l e_l$. Then we have that:
\begin{align*}
\|f(v)\| &= \left\|\sum_{l \in D} \frac{\lambda_l}{\|z_l\|}\cdot z_l \right\| \\
&= \left(\int_\Omega \left| \sum_{l \in D} \frac{\lambda_l}{\|z_l\|} \cdot \mathbbm{1}_{B_l} \cdot \varphi \right|^p \mathrm{d}\mu\right)^{\frac1p} \\
&= \left(\sum_{l \in D} \int_{B_l} \left|\frac{\lambda_l}{\|z_l\|}\right|^p \cdot \left|\varphi \right|^p \mathrm{d}\mu\right)^{\frac1p} &\text{(as the $B_l$'s are disjoint)} \\
&= \left(\sum_{l \in D} \left|\frac{\lambda_l}{\|z_l\|}\right|^p \int_{B_l} \left|\varphi \right|^p \mathrm{d}\mu\right)^{\frac1p}  \\
&= \left(\sum_{l \in D} \left|\lambda_l\right|^p \right)^{\frac1p}  \\
&= \left\| \sum_{l \in D} \lambda_l e_l \right\| \\
&= \|v\|,
\end{align*}
and we are done.
\end{proof}

\begin{lemma}\label{l1-2}
The statement of Lemma~\ref{l1} is still valid if we require that all $y_i$'s are of norm at most $1$ and we allow for $C$ to be of dimension at most $(4nN + 1)^n$.
\end{lemma}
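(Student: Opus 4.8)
The plan is to reduce to Lemma~\ref{l1} applied at a finer precision and then to pull the resulting approximants back inside the open unit ball by a single scalar rescaling. Concretely, I would first invoke Lemma~\ref{l1} with $N$ replaced by $2N$ (which is legitimate, as $N \geq 1$ forces $2N \in \mathbb{N}_{\geq 1}$). This yields a subspace $C \subseteq X$, isometric to $\mathbb{R}^{\dim_\mathbb{R} C}_p$ and of dimension at most $(2n(2N)+1)^n = (4nN+1)^n$ --- which is exactly the bound we are now permitted --- together with vectors $y_1', \ldots, y_n' \in C$ satisfying $\|x_i - y_i'\| \leq \frac{1}{2N}$ for all $i$.

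The only defect of the $y_i'$ is that their norms may exceed $1$: from $\|x_i\| < 1$ and the triangle inequality one gets only $\|y_i'\| \leq \|x_i\| + \frac{1}{2N} < 1 + \frac{1}{2N}$. To cure this I would rescale, setting $y_i := \lambda y_i'$ with $\lambda := \frac{2N}{2N+1}$. Since $C$ is a linear subspace, each $y_i$ still lies in $C$, so the isometry with $\mathbb{R}^{\dim_\mathbb{R} C}_p$ and the dimension bound are left untouched. Moreover $\|y_i\| = \lambda\|y_i'\| < \lambda\left(1 + \frac{1}{2N}\right) = 1$, so the new vectors have norm strictly less than $1$, as required.

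It remains to check that the rescaling does not damage the approximation beyond the allowed tolerance. Here I would estimate
$$\|x_i - y_i\| \leq \|x_i - y_i'\| + \|y_i' - \lambda y_i'\| = \|x_i - y_i'\| + (1-\lambda)\|y_i'\|,$$
and observe that the first summand is at most $\frac{1}{2N}$ by construction, while the second equals $\frac{1}{2N+1}\|y_i'\| \leq \frac{1}{2N+1}\left(1 + \frac{1}{2N}\right) = \frac{1}{2N}$. Adding these gives $\|x_i - y_i\| \leq \frac{1}{N}$, which completes the verification.

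The choice of $\lambda$ is the whole point, and is the only place that requires any thought: it is pinned down by the two simultaneous demands $\lambda\left(1 + \frac{1}{2N}\right) \leq 1$ (to push the norms below $1$) and $(1-\lambda)\left(1 + \frac{1}{2N}\right) \leq \frac{1}{2N}$ (to keep the scaling error within the budget freed up by working at precision $\frac{1}{2N}$ rather than $\frac{1}{N}$). The value $\lambda = \frac{2N}{2N+1}$ meets both with equality, which is exactly why passing from $N$ to $2N$ --- and hence the worsening of the dimension bound from $(2nN+1)^n$ to $(4nN+1)^n$ --- is precisely what is needed. Beyond this bookkeeping I expect no genuine obstacle.
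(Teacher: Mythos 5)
Your proposal is correct and takes essentially the same route as the paper: invoke Lemma~\ref{l1} at precision $\frac{1}{2N}$ (whence the dimension bound $(2n(2N)+1)^n = (4nN+1)^n$) and then rescale the approximants to bring their norms below $1$, splitting the error budget as $\frac{1}{2N} + \frac{1}{2N}$. The only cosmetic difference is that you rescale every $y_i'$ by the single factor $\frac{2N}{2N+1}$, whereas the paper normalizes only those $y_i'$ whose norm is at least $1$; both corrections cost at most $\frac{1}{2N}$, so the two arguments coincide in substance.
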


\begin{proof}
We apply Lemma~\ref{l1} for our $x_i$'s, but with $N$ replaced by $2N$. We therefore obtain a subspace $C \subseteq X$ and $y'_1$,..., $y'_n$ in $C$ such that $C$ is of dimension at most $(4nN + 1)^n$, it is isometric to $\mathbb{R}^{\dim_\mathbb{R} C}_p$ and for all $i$, $\|x_i - y'_i\| \leq \frac1{2N}$. For each $i$, if $\|y'_i\|\geq 1$, set $y_i := \frac{y'_i}{\|y'_i\|}$, else put $y_i := y'_i$. For the ``unmodified'' $y_i$'s, clearly $\|x_i - y_i\| \leq \frac1N$. The others are certainly still in $C$, so we must only show for them that $\|x_i - y_i\| \leq \frac1N$.

Set $\alpha_i := \frac1{\|y'_i\|}$. Since $\|y'_i\| \leq \|x_i\| + \|y'_i - x_i\| \leq 1 + \frac1{2N}$, we get that $\frac{1-\alpha_i}{\alpha_i} \leq \frac1{2N}$, so:
$$\|x_i - y_i\| = \|x_i - \alpha_iy'_i\| \leq \|x_i - y'_i\| + \|y'_i - \alpha_iy'_i\| \leq \frac1{2N} + (1-\alpha_i)\|y'_i\| =  \frac1{2N} + \frac{1-\alpha_i }{\alpha_i} \leq \frac1N,$$
and we are done.
\end{proof}

\begin{lemma}\label{l2-new}
Let $X$ be a Banach space that satisfies the conclusion of Lemma~\ref{l1-2}. Then, for all $x_1$,..., $x_n$ in $X$ of norm exactly $1$, and for all $N \in \mathbb{N}_{\geq 1}$, there is a subspace $C \subseteq X$ and $y_1$,..., $y_n$ of norm exactly $1$ in $C$ such that $C$ is isometric to $\mathbb{R}^{\dim_\mathbb{R} C}_p$ and for all $i$, $\|x_i - y_i\| \leq \frac1N$.
\end{lemma}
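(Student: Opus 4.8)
The plan is to reduce to Lemma~\ref{l1-2} by first shrinking the norm-one vectors slightly so that they become admissible inputs, and then correcting the outputs by radial normalization. Since Lemma~\ref{l1-2} demands strictly sub-unit vectors as input and produces sub-unit vectors as output, whereas here both the $x_i$ and the desired $y_i$ are required to sit exactly on the unit sphere, two adjustments are needed, one before and one after the application.

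First I would fix the parameters $\delta := \frac{1}{4N}$ and $N' := 4N$ and set $x_i' := (1-\delta)x_i$ for each $i$, so that $\|x_i'\| = 1 - \delta < 1$. Applying Lemma~\ref{l1-2} to $x_1', \ldots, x_n'$ with the parameter $N'$ then yields a subspace $C \subseteq X$, isometric to $\mathbb{R}^{\dim_\mathbb{R} C}_p$, together with vectors $y_1', \ldots, y_n' \in C$ of norm less than $1$ satisfying $\|x_i' - y_i'\| \leq \frac{1}{N'}$. A triangle inequality gives $\|x_i - y_i'\| \leq \|x_i - x_i'\| + \|x_i' - y_i'\| \leq \delta + \frac{1}{N'} = \frac{1}{2N}$, using $\|x_i - x_i'\| = \delta\|x_i\| = \delta$. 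We simply discard the dimension bound coming from Lemma~\ref{l1-2}, since the present statement imposes none.

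Next I would normalize, setting $y_i := \frac{y_i'}{\|y_i'\|}$, which lands again in the subspace $C$ and has norm exactly $1$, as required. This is well-defined because $\|y_i'\| \geq \|x_i\| - \|x_i - y_i'\| \geq 1 - \frac{1}{2N} > 0$. The key estimate is the control of the normalization error: since $\|y_i' - y_i\| = \bigl|\,\|y_i'\| - 1\,\bigr| = \bigl|\,\|y_i'\| - \|x_i\|\,\bigr| \leq \|y_i' - x_i\|$, one final triangle inequality yields $\|x_i - y_i\| \leq \|x_i - y_i'\| + \|y_i' - y_i\| \leq 2\bigl(\delta + \frac{1}{N'}\bigr) = \frac{1}{N}$.

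I do not expect a genuine obstacle here: the argument is essentially a two-sided variant of the normalization already performed at the end of Lemma~\ref{l1-2}. The only points requiring care are the bookkeeping of the two independent error sources — the deliberate shrinkage $\delta$ and the approximation tolerance $\frac{1}{N'}$ — so that their combined and then doubled contribution lands at exactly $\frac{1}{N}$, and the verification that $\|y_i'\|$ stays bounded away from $0$ so that the radial normalization is legitimate.
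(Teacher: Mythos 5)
Your proposal is correct and takes essentially the same route as the paper's own proof: apply Lemma~\ref{l1-2} at a finer precision ($2N$ in the paper, $4N$ for you), then radially normalize the resulting $y_i'$ onto the unit sphere and bound the normalization error $\|y_i'-y_i\| = 1-\|y_i'\|$ by the approximation error, so that the two contributions sum to $\frac1N$. The only divergence is your preliminary shrinkage $x_i':=(1-\delta)x_i$: the paper reads the norm hypothesis of Lemma~\ref{l1-2} non-strictly and feeds the norm-one $x_i$'s into it directly, so your extra step is a harmless (indeed slightly more careful) precaution rather than a different argument.
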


\begin{proof}
Let $x_1$,..., $x_n$ in $X$ of norm exactly $1$, and $N \in \mathbb{N}_{\geq 1}$. We apply our hypothesis (i.e. the conclusion of Lemma~\ref{l1-2}) for these $x_i$'s and we set $N$ to be $2N$. We therefore obtain a subspace $C \subseteq X$ and $y'_1$,..., $y'_n$ in $C$ of norm at most $1$ such that $C$ is of dimension at most $(8nN + 1)^n$ (note that we no longer care about this), it is isometric to $\mathbb{R}^{\dim_\mathbb{R} C}_p$ and for all $i$, $\|x_i - y'_i\| \leq \frac1{2N}$. For each $i$, we have that $1 = \|x_i\| \leq \|y'_i\| + \|x_i - y'_i\| \leq \|y'_i\| + \frac1{2N}$, from which we get that $\|y'_i\| \geq 1 - \frac1{2N} > 0$. We may therefore set $\alpha_i := \frac1{\|y'_i\|}$ and $y_i:=\alpha_i y'_i$. Those vectors are of norm $1$ and still in $C$, so what remains to be shown is that for each $i$, $\|x_i - y_i\| \leq \frac1N$.

For each $i$, $\|y'_i\| \leq 1$, so $\alpha_i -1 \geq 0$. Then from the relation $\|y'_i\| \geq 1 - \frac1{2N}$ obtained above, we get that $\frac{\alpha_i -1}{\alpha_i} \leq \frac1{2N}$, so:
$$\|x_i - y_i\| = \|x_i - \alpha_iy'_i\| \leq \|x_i - y'_i\| + \|y'_i - \alpha_iy'_i\| \leq \frac1{2N} + (\alpha_i -1)\|y'_i\| =  \frac1{2N} + \frac{\alpha_i -1}{\alpha_i} \leq \frac1N.$$
\end{proof}

We may now state the result that we were striving for.

\begin{theorem}\label{Banach-char-final}
Let $p > 1$. A Banach space $X$ is isometric to some $L_p(\mu)$ space iff for all $x_1$,..., $x_n$ in $X$ of norm at most $1$ and for all $N \in \mathbb{N}_{\geq 1}$, there is a subspace $C \subseteq X$ and $y_1$,..., $y_n$ in $C$ of norm at most $1$ such that $C$ is of dimension at most $(4nN + 1)^n$, it is isometric to $\mathbb{R}^{\dim_\mathbb{R} C}_p$ and for all $i$, $\|x_i - y_i\| \leq \frac1N$.
\end{theorem}

\begin{proof}
The ``only if'' direction is simply Lemma~\ref{l1-2}.

To prove the ``if'' direction, we first apply Lemma~\ref{l2-new} and then use the perturbation argument laid out in \cite[p. 198]{LinTza73} to infer that the space under discussion is actually a $\mathcal{L}_{p,1+\varepsilon}$ space for all $\varepsilon > 0$, so that we may finish the equivalence proof by applying Theorem~\ref{Banach-char}. In the following, for the sake of self-containedness, we detail this perturbation argument.

By Lemma~\ref{l2-new}, we know that for all $x_1$,..., $x_n$ in $X$ of norm exactly $1$, and for all $N \in \mathbb{N}_{\geq 1}$, there is a subspace $C \subseteq X$ and $y_1$,..., $y_n$ of norm exactly $1$ in $C$ such that $C$ is isometric to $\mathbb{R}^{\dim_\mathbb{R} C}_p$ and for all $i$, $\|x_i - y_i\| \leq \frac1N$. It is now sufficient to prove that for all $\varepsilon >0$ and all finite-dimensional $Y \subseteq X$ there is a finite-dimensional $Z \subseteq X$ with $Y \subseteq Z$ and there is a linear isomorphism $T :  \mathbb{R}^{\dim_\mathbb{R} Z}_p \to Z$ with $\|T\|\|T^{-1}\| \leq 1+\varepsilon$.

Let $\varepsilon > 0$ and let $Y$ be a finite-dimensional subspace of $X$. Take $x_1$,..., $x_n$ of norm exactly 1 that form a basis for $Y$. Since all norms on a finite-dimensional space are equivalent, we have that there is a $K \geq 1$ such that for all families of scalars $(\lambda_i)$,
$$K^{-1} \max_i |\lambda_i| \leq \left\| \sum_i \lambda_i x_i \right\| \leq K \max_i |\lambda_i|.$$
Set $\delta:=\frac\varepsilon{2+\varepsilon}$, so $\delta \in (0,1)$ and $\frac{1+\delta}{1-\delta}=1+\varepsilon$. Take $N \in \mathbb{N}$ such that $\frac1N \leq \frac{\delta}{2nK}$. Then there is a subspace $C \subseteq X$ and $y_1$,..., $y_n$ of norm exactly $1$ in $C$ such that $C$ is isometric to $\mathbb{R}^{\dim_\mathbb{R} C}_p$ and for all $i$, $\|x_i - y_i\| \leq \frac{\delta}{2nK}$. Then, using the triangle inequality and that $K \geq 1$ and $\delta <1$, we get that for all families of scalars $(\lambda_i)$,
$$(2K)^{-1} \max_i |\lambda_i| \leq \left\| \sum_i \lambda_i y_i \right\| \leq 2K \max_i |\lambda_i|.$$
From this it follows that the $y_i$'s are linearly independent. Therefore, one can use the Hahn-Banach theorem on the subspace of $C$ generated by the $y_i$'s to obtain $n$ continuous linear functionals on $X$, $x_1^*$,..., $x_n^*$, such that for each $i$, $\|x_i^*\| \leq 2K$ and for each $i$ and $j$, $x_i^*(y_j)=\delta_{ij}$. Define now the operator $U: C \to X$, for all $h \in C$, by:
$$U(h):= h + \sum_i x_i^*(h)(x_i-y_i).$$
Set $Z$ to be the image of $U$, so $Z$ is finite-dimensional. Since, clearly, for all $i$, $U(y_i)=x_i$, $Y \subseteq Z$. Set now $m:=\dim_\mathbb{R} C$ and let $V: \mathbb{R}^{m}_p \to C$ be an isometry. Then $\{UV(e_1),...,UV(e_m)\}$ span $Z$, so there are $i_1$,..., $i_l$ such that $\{UV(e_{i_1}),..., UV(e_{i_l})\}$ is a basis for $Z$. If we identify the vector space spanned by $\{e_{i_1},...,e_{i_l}\}$ with $\mathbb{R}^l_p$, we may take $T$ to be the restriction of $UV$ to this space. Then $T$ is a linear isomorphism from $\mathbb{R}^{\dim_\mathbb{R} Z}_p $ to $Z$ and a simple calculation that uses the definition of $U$ shows that $\|T\| \leq 1 + \delta$ and $\|T^{-1}\| \leq \frac1{1-\delta}$, so  $\|T\|\|T^{-1}\| \leq \frac{1+\delta}{1-\delta} =1+\varepsilon$.
\end{proof}

The advantage of the condition obtained in the above theorem is that it is both intrinsic and quantitative, therefore amenable to a logical axiomatization.

\begin{table}[ht!]
\begin{center}
\begin{tabular}{ | c |}
\hline
\ \\
$\psi_m(\underline{z}) := \forall \underline{a} \left( \left\| \sum_{i=1}^m a_i z_i \right\| = \left( \sum_{i=1}^m |a_i|^p \right) ^{\frac1p} \right)$\\[2mm]
$\psi'_{m,n}(\underline{y},\underline{z}) := \bigwedge_{k=1}^n \left( \exists \underline{\lambda} \left( y_k = \sum_{i=1}^m \lambda_i z_i \right) \right)$\\[2mm]
$\psi''_{n,N}(\underline{x},\underline{y}) := \bigwedge_{k=1}^n \left( \|x_k - y_k\| \leq \frac1{N+1} \land \|y_k\| \leq 1 \right) $\\[2mm]
$\varphi_{n,m,N}(\underline{x}) := \exists \underline{y} \exists \underline {z} \left( \psi_m(\underline{z}) \land \psi'_{m,n}(\underline{y},\underline{z}) \land \psi''_{n,N}(\underline{x},\underline{y})  \right) $\\[2mm]
$\phi_{n,N}(\underline{x}) := \bigvee_{0 \leq m \leq (4nN + 1)^n} \varphi_{n,m,N}(\underline{x})$\\[2mm]
$ A_{n,N} := \forall \underline{x} \left( \left(\bigwedge_{k=1}^n \|x_k\|\leq 1 \right) \to \phi_{n,N} (\underline{x}) \right)$\\[2mm]
\ \\
\hline
\end{tabular}
\caption{A first axiomatization.}\label{tabel-1}
\end{center}
\end{table}

Table~\ref{tabel-1} shows one such axiomatization (into a crude first-order-like language), i.e. the characterization of the space is expressed by the simultaneous validity of all $A_{n,N}$ sentences. With that in mind, by closely examining the formulas, one can easily see that they represent a straightforward translation of the condition from before.

\begin{table}[ht!]
\begin{center}
\begin{tabular}{ | c |}
\hline
\ \\
$\psi(m,z) := \forall a^{1(0)} \left( \left\| \sum_{i=1}^m |a(i)|_\mathbb{R} \cdot_X z(i) \right\| =_\mathbb{R} \left( \sum_{i=1}^m |a(i)|^p_\mathbb{R} \right)^{1/p} \right)$\\[2mm]
$\psi'(m,n,y,z,\lambda) := \forall k \preceq_0 (n-1) \left( y(k+1) =_X \sum_{i=1}^m \lambda(k+1,i) \cdot_X z(i) \right)$\\[2mm]
$\psi''(n,N,x,y) := \forall k \preceq_0 (n-1) \left( \left\| \widetilde{x(k+1)} - y(k+1) \right\| \leq_\mathbb{R} \frac1N \land \|y(k+1)\| \leq_\mathbb{R} 1 \right)$\\[2mm]
$\varphi(n,m,N,x,y,z,\lambda) := \psi(m,z) \land \psi'(m,n,y,z,\lambda) \land \psi''(n,N,x,y)$\\[2mm]
$B := \forall n^0, N^0 \geq 1 \forall x^{X(0)} \exists y, z \preceq_{X(0)} 1_{X(0)} \exists \lambda^{1(0)(0)} \in [-1,1] \exists m \preceq_0 (4nN+1)^n$\\[2mm]
$\varphi(n,m,N,x,y,z,\lambda)$\\[2mm]
\ \\
\hline
\end{tabular}
\caption{The $\Delta$-axiomatization.}\label{tabel-2}
\end{center}
\end{table}

Table~\ref{tabel-2}, where we have used some of the notations from \cite[Definitions 7.9 and 7.10]{GunKoh16}, shows how one may translate the infinite family of axioms $A_{n,N}$ into the one axiom $B$ which is, like the one in \cite{GunKoh16}, representable as a $\Delta$-sentence. Let us see some details of the translation. Firstly, we remark that the operation $\widetilde{v} := \frac{v}{\max\{\|v\|,1\}}$ that we used excused us from writing the antecedent of $A_{n,N}$. Then we see that by substituting into $\psi_m(\underline{z})$ all $\lambda_i$'s with $0$, except for one which we set to $1$, we obtain the fact that all $z_i$'s are of norm one. We have also postulated that all $y_k$'s are of norm at most $1$. Thus, if we have, as in $\psi'_{m,n}(\underline{y},\underline{z})$, that for a given $k$:
$$y_k = \sum_{i=1}^m \lambda_i z_i,$$
the formula $\psi_m(\underline{z})$ tells us further that:
$$1 \geq \|y_k\| \geq \left\|\sum_{i=1}^m \lambda_i z_i\right\| = \left( \sum_{i=1}^m |\lambda_i|^p \right) ^{\frac1p},$$
from which we get that each such $\lambda_i$ is in the interval $[-1,1]$. These results allow us to correspondingly bound the $y$, the $z$ and the $\lambda$ (which are now properly functionals) in the axiom $B$. In the process, we have used (cf. \cite[p. 93]{Koh08} and \cite[Definition 7.9.2]{GunKoh16}) the notation
$$\exists \lambda^{1(0)(0)} \in [-1,1] $$
for
$$\exists \lambda \preceq_{1(0)(0)} \lambda k,i.(\lambda n.j(2^{n+3}+1,2^{n+2}-1)).$$
Another such bounding comes from the $(4nN+1)^n$ established before (i.e. here it matters that the characterization is quantitative), which helped us eliminate the potentially infinite disjunction in Table~\ref{tabel-1} (where such constraints were not yet relevant) and the unbounded existential quantifier in Table~\ref{tabel-2} (which would have hindered us in presenting the axiom $B$ as a $\Delta$-sentence). As a curiosity, we note that choosing to present $B$ as a single axiom and not as an infinite schema like in Table~\ref{tabel-1}, i.e. taking advantage of the arithmetic already present in the framework, adds a bit of strength to the system, given the fact that we do not work here with any sort of $\omega$-rule.

We denote by $\mathcal{A}^\omega[X,\|\cdot\|,\mathcal{C},L^p]$ the extension of the system $\mathcal{A}^\omega[X,\|\cdot\|,\mathcal{C}]$ by the constant $c_p$ of type $1$, together with the axiom $1_\mathbb{R} \leq_\mathbb{R} c_p$ and the axiom $B$ from above. From the above discussion, the following soundness theorem holds.

\begin{theorem}[{cf. \cite[Propositions 3.5 and 7.12]{GunKoh16}}]\label{sound}
Let $X$ be a Banach space and $p \geq 1$. Denote by $\mathcal{S}^{\omega,X}$ its associated set-theoretic model and let the constant $c_p$ in our extended signature take as a value the canonical representation of the real number $p$. Then $\mathcal{S}^{\omega,X}$ is a model of $\mathcal{A}^\omega[X,\|\cdot\|,\mathcal{C},L^p]$ iff $X$ is isomorphic to some $L^p(\Omega, \mathcal{F}, \mu)$ space.
\end{theorem}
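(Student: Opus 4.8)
The plan is to prove the two directions of the equivalence separately, leveraging the work already done in Lemmas~\ref{l1}--\ref{l2-new} and in Theorem~\ref{Banach-char}, and to reduce the soundness statement to the correctness of the translation carried out in Tables~\ref{tabel-1} and~\ref{tabel-2}. The key observation is that, once the constant $c_p$ is fixed to the canonical representation of the real $p$, validity of the axiom $B$ in the standard model $\mathcal{S}^{\omega,X}$ should coincide exactly with the ``third condition'' displayed in the boxed text preceding Table~\ref{tabel-1}, i.e. the intrinsic quantitative $\mathcal{L}_{p,1+\varepsilon}$-type statement. So the soundness theorem will follow from two things: (a) $\mathcal{S}^{\omega,X} \models B$ if and only if $X$ satisfies that third condition for all $n$ and all $N \geq 1$; and (b) that third condition characterizes the $L^p(\mu)$ spaces, which is precisely the content of Theorem~\ref{Banach-char} combined with Lemma~\ref{l1-2}.

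For direction (a), the \emph{forward} reading is essentially unwinding the Tarskian semantics of $B$ against the definitions of the subformulas $\psi$, $\psi'$, $\psi''$, $\varphi$. First I would note that, as explained in the translation discussion, $\psi(m,z)$ forces each $z_i$ to have norm $1$ and each admissible coefficient to lie in $[-1,1]$ (hence the bounds $\preceq_{X(0)(0)} 1$ and $\lambda \in [-2,2]$ are not restrictive), and that the normalizing operation $\widetilde{v} := v / \max\{\|v\|,1\}$ absorbs the antecedent $\bigwedge_k \|x_k\| \leq 1$ of $A_{n,N}$. From this, interpreting $B$ in $\mathcal{S}^{\omega,X}$ says exactly: for every $n,N \geq 1$ and every sequence $\underline{x}$, there exist an $m \leq (4nN+1)^n$, vectors $z_1,\dots,z_m$ of norm $1$ spanning a subspace isometric to $\mathbb{R}^m_p$ (this is what $\psi(m,z)$ encodes), and vectors $y_1,\dots,y_n$ in that span, of norm $\leq 1$, with $\|\widetilde{x_k} - y_k\| \leq \frac{1}{N}$. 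Matching the normalized inputs $\widetilde{x_k}$ (which have norm $\leq 1$) against the boxed condition gives the equivalence. The only mild care needed is the indexing shift (the $k+1$ versus $k$, the $n-1$ upper bound) and the passage between ``$\leq \frac{1}{N+1}$'' in $\psi''_{n,N}$ of Table~\ref{tabel-1} and ``$\leq \frac{1}{N}$'' in Table~\ref{tabel-2}, but since $N$ ranges over all of $\mathbb{N}_{\geq 1}$ these are interchangeable.

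For direction (b), the ``if'' part of the soundness statement ($X \cong L^p(\mu) \Rightarrow \mathcal{S}^{\omega,X} \models B$) follows by invoking Lemma~\ref{l1-2} directly: it produces, for arbitrary $\underline{x}$ of norm $\leq 1$ and arbitrary $N$, exactly the subspace $C$ isometric to $\mathbb{R}^{\dim C}_p$ of dimension at most $(4nN+1)^n$ together with approximants $y_i$ of norm $\leq 1$, which is what $B$ asserts. For the ``only if'' part, assuming $\mathcal{S}^{\omega,X} \models B$ we obtain via (a) the boxed third condition for all $n,N$; reading $\varepsilon := \frac{1}{N}$ and letting the $z_i$'s furnish the isometric copy of $\mathbb{R}^m_p$ containing the approximants, one recovers that every finitely many vectors can be approximated to within $\varepsilon$ inside a subspace of Banach--Mazur distance $1+\varepsilon'$ from $\mathbb{R}^{\dim}_p$; a standard approximation/closure argument (the same one alluded to as the ``commutativity of approximation'' argument of \cite[p.~198]{LinTza73}) then upgrades this to the genuine $\mathcal{L}_{p,1+\varepsilon}$ property for every $\varepsilon > 0$, whence Theorem~\ref{Banach-char} yields $X \cong L^p(\mu)$.

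I expect the main obstacle to be the careful verification in step (a) that the syntactic encoding is \emph{faithful}, i.e. that no information is lost or spuriously added when one compresses the infinite family $\{A_{n,N}\}$ into the single $\Delta$-sentence $B$ and replaces the existential over subspaces and dimensions by bounded existentials over the functionals $y,z,\lambda$ and the bounded numeral $m$. In particular one must check that the bounds $\preceq_{X(0)(0)} 1$, $\lambda \in [-2,2]$, and $m \preceq_0 (4nN+1)^n$ are genuinely witnessed whenever the intrinsic condition holds (so that the bounded quantifiers do not exclude any legitimate witness), which is where the norm-$1$ consequence of $\psi(m,z)$ and the coefficient bound derived in the translation discussion do the real work. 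The remaining steps are either direct appeals to the already-proved lemmas or routine semantic bookkeeping.
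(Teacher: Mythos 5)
Your proposal is correct and matches the paper's own argument, which is exactly the ``above discussion'' preceding the theorem: the faithfulness of the Table~\ref{tabel-2} encoding of the boxed quantitative condition (including the observation that $\psi$ forces the $z_i$ to have norm $1$ and the coefficients to lie in $[-1,1]$, so the bounded quantifiers lose nothing), Lemma~\ref{l1-2} for the direction from $L^p(\mu)$ to validity of $B$, and Lemma~\ref{l2-new} combined with the commutativity-of-approximation argument of Lindenstrauss--Tzafriri and Theorem~\ref{Banach-char} for the converse. Your decomposition into steps (a) and (b) is precisely the paper's implicit structure, so there is nothing to add.
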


In a parallel way to the one suggested in \cite{HenRay07}, by some similar arguments to the ones used above to construct the required higher-order system, one could perform reasonable transformations to the formulas in Table~\ref{tabel-1}, obtaining a new, concrete proof of the following classical result of Krivine and Henson.

\begin{theorem}\label{pb}
The subclass of Banach spaces which are isomorphic to spaces of the form $L^p(\mu)$ is axiomatizable in positive-bounded logic.
\end{theorem}

Analogously to the treatment done in \cite{GunKoh16} for the classes of Banach lattices, we may now state the corresponding metatheorem for the system devised above.

\begin{theorem}[{Logical metatheorem for $L^p(\mu)$ Banach spaces, cf. \cite[Theorems 5.13 and 7.13]{GunKoh16}}]\label{S3thm}
Let $\rho \in \text{{\bf T}}^X$ be an admissible type. Let $B_\forall(x,u)$ be a $\forall$-formula with at most $x,u$ free and $C_\exists(x,v)$ an $\exists$-formula with at most $x,v$ free. Let $\Delta$ be a set of $\Delta$-sentences. Suppose that:
$$\mathcal{A}^\omega[X,\|\cdot\|,\mathcal{C},L^p] + \Delta \vdash \forall x^\rho (\forall u^0 B_\forall(x,u) \to \exists v^0 C_\exists(x,v)).$$
Then one can extract a partial functional $\Phi : S_{\widehat{\rho}} \rightharpoonup \mathbb{N}$, whose restriction to the strongly majorizable functionals of $S_{\widehat{\rho}}$ is a bar-recursively computable functional of $\mathcal{M}^\omega$, such that for all $L^p(\mu)$ Banach spaces $(X,\|\|)$ having the property that any associated set-theoretic model of it satisfies $\Delta$, we have that for all $x \in S_\rho$ and $x^* \in S_{\widehat{\rho}}$ such that $x^* \gtrsim_\rho x$, the following holds:
$$\forall u \leq \Phi(x^*) B_\forall(x,u) \to \exists v \leq \Phi(x^*) C_\exists(x,v).$$
All the additional considerations from Theorem~\ref{S2thm} also apply here.
\end{theorem}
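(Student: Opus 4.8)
The plan is to reduce Theorem~\ref{S3thm} to the general metatheorem for $\Delta$-axioms, Theorem~\ref{S2thm}, by recognizing that the passage from $\mathcal{A}^\omega[X,\|\cdot\|,\mathcal{C}]$ to $\mathcal{A}^\omega[X,\|\cdot\|,\mathcal{C},L^p]$ introduces nothing that falls outside the scope of that theorem. Concretely, the new system differs only by three items: the constant $c_p$ of type $1$, its quantifier-free axiom $1_\mathbb{R} \leq_\mathbb{R} c_p$, and the axiom $B$. The first thing I would verify is that $B$ is genuinely a $\Delta$-sentence. By its construction in Table~\ref{tabel-2} it already has the required shape $\forall \underline{a}\, \exists \underline{b} \preceq \underline{r}\,\underline{a}\, \forall \underline{c}\, B_0$: the existential witnesses $y, z, \lambda, m$ are all bounded by the explicit terms $1_{X(0)(0)}$, the interval $[-2,2]$, and $(4nN+1)^n$, while the residual universal quantifiers (over $\lambda$ in $\psi$, and the bounded $\forall k \preceq_0 (n-1)$ in $\psi'$ and $\psi''$, all of admissible type) can be absorbed into the trailing block $\underline{c}$, leaving a quantifier-free matrix $B_0$. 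I would therefore set $\Delta' := \Delta \cup \{B\}$, which is again a set of $\Delta$-sentences.

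Second, I would dispose of the constant $c_p$ and its axiom. Following Theorem~\ref{sound}, $c_p$ is interpreted as the fixed real number $p \geq 1$; as a single fixed object of type $1$ it is bounded and hence strongly majorizable, and it receives the identical value in $\mathcal{S}^{\omega,X}$ and in $\mathcal{M}^{\omega,X}$. Consequently its presence---both as the new signature symbol and as it occurs inside the real-power terms $|\cdot|^{c_p}$ and $(\cdot)^{1/c_p}$ appearing in $\psi$---does not interfere with the juggling between the two models that drives the proof of Theorem~\ref{S2thm}: the powers are term-definable real operations and the extra quantifier-free axiom $1_\mathbb{R} \leq_\mathbb{R} c_p$ is harmless. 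In this way the hypothesis $\mathcal{A}^\omega[X,\|\cdot\|,\mathcal{C},L^p] + \Delta \vdash \forall x^\rho (\forall u^0 B_\forall(x,u) \to \exists v^0 C_\exists(x,v))$ becomes exactly a derivation in $\mathcal{A}^\omega[X,\|\cdot\|,\mathcal{C}] + \Delta'$ over the signature enriched by $c_p$.

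Third, I would apply Theorem~\ref{S2thm} verbatim to the $\Delta$-set $\Delta'$. This produces the partial functional $\Phi : S_{\widehat{\rho}} \rightharpoonup \mathbb{N}$ with all the asserted properties---bar-recursive computability on the majorizable functionals of $\mathcal{M}^\omega$, the implication holding for every $x \in S_\rho$ and $x^* \in S_{\widehat{\rho}}$ with $x^* \gtrsim_\rho x$, and the three supplementary clauses on totality, tuples, and the dependent-choice-free case---valid for all Banach spaces whose associated set-theoretic models satisfy $\Delta'$. The final step is to translate this class back into the language of the statement: by Theorem~\ref{sound}, with $c_p = p$, a Banach space has its models satisfy $B$ precisely when it is isometric, hence isomorphic, to some $L^p(\mu)$ space. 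Therefore the requirement ``models satisfy $\Delta' = \Delta \cup \{B\}$'' coincides with ``$L^p(\mu)$ space whose models satisfy $\Delta$'', which is exactly the class appearing in the conclusion of Theorem~\ref{S3thm}; matching the two completes the argument.

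The only point that genuinely requires care, and which I expect to be the main obstacle, is the treatment of the new real constant $c_p$ in the second step: one must check that introducing a type-$1$ constant carrying only a lower bound, and letting it occur inside the exponents of $B$, leaves intact the majorizability apparatus underlying Theorem~\ref{S2thm}. The formal shape of $B$ as a $\Delta$-sentence is, by contrast, immediate from Table~\ref{tabel-2}. Once one observes that for a fixed $p$ the constant $c_p$ denotes a single bounded, model-independent functional, this obstacle dissolves and the reduction goes through mechanically.
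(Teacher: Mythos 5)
Your proposal is correct and follows essentially the same route as the paper: reduce to Theorem~\ref{S2thm} by observing that the two additional axioms of $\mathcal{A}^\omega[X,\|\cdot\|,\mathcal{C},L^p]$ are $\Delta$-sentences and that the new constant $c_p$ poses no obstacle to the majorization machinery. The only difference is that the paper makes the majorization of $c_p$ concrete rather than semantic, citing \cite[Lemma 17.8]{Koh08} to majorize the canonical representation of $p$ by $M(b) := \lambda n.j(b2^{n+2},2^{n+1}-1)$ with $j$ the Cantor pairing function and $b \in \mathbb{N}$, $b \geq p$, which also yields the uniformity remark that the extracted $\Phi$ depends on $p$ only through the integer bound $b$ --- your observation that $c_p$ is a fixed bounded type-$1$ object is the same point, just left without the explicit majorant that the extraction procedure actually uses.
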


\begin{proof}
This theorem extends Theorem~\ref{S2thm}. The two additional axioms are $\Delta$-axioms, and the constant $c_p$ is majorized (as in \cite[Lemma 17.8]{Koh08}) by $M(b) :=  \lambda n.j(b2^{n+2},2^{n+1} -1)$, where $j$ is the Cantor pairing function and $b \in \mathbb{N}$ such that $b \geq p$ (e.g., $b:= \lceil (c_p(0))_\mathbb{Q} \rceil +1$). We note that the $\Phi$ depends on $p$ only via this upper bound $b$.
\end{proof}

\section{The derivation of the modulus of uniform convexity}\label{sec:uc}

The axiomatization that we have just obtained has, essentially, the form of a comparison principle with respect to the $p$-normed Euclidean spaces. This suggests that it may be particularly application-friendly. Let us see why this is the case. Suppose that we have an existing mathematical theorem regarding $L^p$ spaces. The particularization of the proof to the Euclidean case is likely to be easily derivable in our higher systems of arithmetic (with the possible addition of universal lemmas), since statements about integrals are reduced to statements about sums and powers of real numbers. The second step would be to translate the result along the $\varepsilon$-close approximation of our characterization, a translation involving a sequence of boundings which is likely to leave the original statement intact if it is well-behaved enough. We shall now illustrate this general strategy on a classical result on $L^p$ spaces.

Uniform convexity is a fundamental notion in the theory of Banach spaces, introduced by Clarkson \cite{Cla36} in 1936. As per \cite[Section 6.4]{GunKoh16}, the property can be formalized as:
$$\forall k^0 \exists n^0 \forall x_1, x_2 \preceq_X 1_X \left(\left\|\frac12(x_1 + x_2)\right\| \geq 1-2^{-n} \to \|x_1-x_2\|<2^{-k}\right).$$
and it is suitable for bound extraction. We note that, in the above statement, like in the definition of the convergence of a sequence, a bound (for $n$, in this case) is also a witness. Also, with the logical issues now resolved, we note that, for the ease of understanding, we shall work with $\varepsilon$-style characterizations. Therefore, following \cite[Section 2.1]{KohLeu12}, we define a {\bf modulus of uniform convexity} for a Banach space to be a function $\eta : (0,2] \to (0,\infty)$ such that for any $\varepsilon > 0$ and any $x_1$ and $x_2$ with $\|x_1\| \leq 1$, $\|x_2\| \leq 1$ and $\|x_1-x_2\|\geq \varepsilon$, we have that
$$\left\|\frac12(x_1 + x_2)\right\| \leq 1-\eta(\varepsilon).$$
We make the observation that what is usually called ``the'' modulus of uniform convexity of a space is the ``optimal'' such modulus, i.e. for each $\varepsilon > 0$ we take as $\eta(\varepsilon)$ the greatest value of $\delta$ that works for all suitable $x_1$, $x_2$, i.e. the minimum of the expression $1-\left\|\frac12(x_1 + x_2)\right\|$. The goal of this section is to derive a modulus of uniform convexity for $L^p(\mu)$ spaces using only the axiomatization established in the previous section. We will consider, for simplicity, $p \geq 2$, i.e. we add the additional admissible axiom $2 \leq_\mathbb{R} c_p$ to our system. For this case, the modulus of uniform convexity was already computed by Clarkson (see \cite[p. 403]{Cla36}) and later shown to be optimal by Hanner \cite[Theorem 2]{Han56}.

We begin with some results of real analysis. The following lemma and corollary are standard in the literature.

\begin{lemma}
For all $x_1, x_2 \geq 0$, $x_1^p + x_2^p \leq (x_1^2 + x_2^2)^{p/2}$.
\end{lemma}

\begin{proof}
The case $x_2 = 0$ is clear. If $x_2 \neq 0$, we can divide by $x_2^p$ and we notice that we only have to prove that for all $t \geq 0$, $t^p + 1 \leq (t^2 + 1)^{p/2}$. Consider the function $f: \mathbb{R} \to \mathbb{R}$, defined, for all $t$, by $f(t):= (t^2 + 1)^{p/2} - t^p - 1$. Since $f'(t) = \frac{p}2(t^2 + 1)^{(p/2) - 1} \cdot 2t - pt^{-1} \geq pt^{p-2} \cdot t - pt^{p-1} = 0$ and $f(0) = 0$, we obtain that for all $t$, $f(t) \geq 0$, and hence the conclusion.
\end{proof}

\begin{corollary}\label{coro}
For all $a, b \in \mathbb{R}$, $\left|\frac{a+b}2\right|^p + \left|\frac{a-b}2\right|^p \leq \frac12(|a|^p + |b|^p)$.
\end{corollary}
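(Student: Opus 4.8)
The plan is to reduce the corollary to the preceding lemma by a suitable substitution, followed by a single convexity estimate. First I would set $x_1 := \left|\frac{a+b}{2}\right|$ and $x_2 := \left|\frac{a-b}{2}\right|$, both of which are nonnegative, and apply the lemma directly to obtain
$$\left|\frac{a+b}{2}\right|^p + \left|\frac{a-b}{2}\right|^p \leq \left(\left|\frac{a+b}{2}\right|^2 + \left|\frac{a-b}{2}\right|^2\right)^{p/2}.$$

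The next step is the elementary parallelogram-type identity. Since the absolute values disappear under squaring, I would compute
$$\left|\frac{a+b}{2}\right|^2 + \left|\frac{a-b}{2}\right|^2 = \frac{(a+b)^2 + (a-b)^2}{4} = \frac{a^2 + b^2}{2},$$
so the right-hand side above becomes $\left(\frac{a^2+b^2}{2}\right)^{p/2}$.

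It then remains to bound $\left(\frac{a^2+b^2}{2}\right)^{p/2}$ by $\frac12\left(|a|^p + |b|^p\right)$. Writing $|a|^p = (a^2)^{p/2}$ and $|b|^p = (b^2)^{p/2}$, this is precisely the convexity inequality
$$\left(\frac{a^2 + b^2}{2}\right)^{p/2} \leq \frac{(a^2)^{p/2} + (b^2)^{p/2}}{2},$$
i.e. the two-point Jensen inequality for the map $t \mapsto t^{p/2}$ on $[0,\infty)$. This map is convex exactly because $p/2 \geq 1$, which is where the standing assumption $p \geq 2$ enters; chaining the three displays yields the claim. The only point requiring any care is this last convexity step, and even it is routine once one recognizes that the hypothesis $p \geq 2$ is what makes the exponent $p/2$ at least $1$; everything else is a direct application of the lemma together with an algebraic identity.
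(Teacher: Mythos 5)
Your proposal is correct and follows essentially the same route as the paper's proof: the same substitution $x_1 := \left|\frac{a+b}2\right|$, $x_2 := \left|\frac{a-b}2\right|$ into the lemma, the same parallelogram-type identity, and the same final convexity estimate. If anything, your attribution of the last step to the convexity of $t \mapsto t^{p/2}$ (valid since $p/2 \geq 1$) is stated more precisely than in the paper, which cites the convexity of $t \mapsto t^p$.
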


\begin{proof}
We substitute into the above lemma $x_1 := \left|\frac{a+b}2\right|$ and $x_2 := \left|\frac{a-b}2\right|$. Since $\left|\frac{a+b}2\right|^2 + \left|\frac{a-b}2\right|^2 = \frac12(a^2 + b^2)$, we obtain that:
\begin{align*}
\left|\frac{a+b}2\right|^p + \left|\frac{a-b}2\right|^p &\leq \left( \frac12(a^2 + b^2) \right) ^{p/2} \\
&\leq \frac12 ((a^2)^{p/2} + (b^2)^{p/2}) \\
&= \frac12(|a|^p + |b|^p),
\end{align*}
where the last inequality follows from the convexity of the function $t \mapsto t^p$ on $(0, \infty)$, for any $p \geq 2$.
\end{proof}

Set, now, for all $a, d \in (0,1)$, $\sigma(a,d):=a-(1-((1-a^p)^{1/p} + d)^p )^{1/p}$.

\begin{lemma}
For all $a,d \in (0,1)$ with $d < 1-(1-a^p)^{1/p}$, $\sigma(a,d)$ is defined and strictly positive.
\end{lemma}

\begin{proof}
For the first part, we see that the condition implies that
$$(1-a^p)^{1/p} + d < 1,$$
so
$$1-((1-a^p)^{1/p} + d)^p \geq 0,$$
which is what we need for the last $1/p$'th power in the definition of $\sigma(a,d)$ to make sense.

For the second part, since $d>0$, we have that $(1-a^p)^{1/p} < (1-a^p)^{1/p} + d$, so
$$1-a^p < ((1-a^p)^{1/p} + d)^p.$$
From that we successively obtain:
$$a^p > 1- ((1-a^p)^{1/p} + d)^p,$$
$$a > (1- ((1-a^p)^{1/p} + d)^p)^{1/p},$$
$$a - (1- ((1-a^p)^{1/p} + d)^p)^{1/p} > 0.$$
\end{proof}

\begin{lemma}\label{lsigma}
For all $a,d \in (0,1)$ with $d < 1-(1-a^p)^{1/p}$ and all $\delta \in (0, \sigma(a,d))$, we have that:
$$(1 - (a-\delta)^p)^{1/p} \leq (1-a^p)^{1/p} + d.$$
\end{lemma}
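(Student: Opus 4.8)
The plan is to prove the inequality by directly unwinding the definition of $\sigma(a,d)$ and reversing, step for step, the chain of manipulations used in the preceding lemma (the one establishing $\sigma(a,d) > 0$), relying throughout on the fact that $t \mapsto t^p$ is strictly increasing on $[0,\infty)$, and hence so is its inverse $t \mapsto t^{1/p}$. First I would rewrite the hypothesis $\delta < \sigma(a,d)$, using $\sigma(a,d) = a - (1-((1-a^p)^{1/p}+d)^p)^{1/p}$, in the equivalent form
\[ \left(1 - \left((1-a^p)^{1/p} + d\right)^p\right)^{1/p} < a - \delta. \]

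Next I would record the nonnegativity bookkeeping needed to pass the monotone maps across the inequality. Since the subtracted $p$-th root in the definition of $\sigma$ is nonnegative, we have $\sigma(a,d) \leq a$, so $\delta < \sigma(a,d) \leq a$ forces $a - \delta > 0$; the left-hand side above is a $p$-th root of a nonnegative quantity, hence also nonnegative. Therefore raising both sides to the $p$-th power preserves the strict inequality, giving
\[ 1 - ((1-a^p)^{1/p}+d)^p < (a-\delta)^p, \]
and rearranging yields
\[ 1 - (a-\delta)^p < ((1-a^p)^{1/p}+d)^p. \]

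Finally, I would observe that both sides of this last inequality are nonnegative—the right-hand side obviously, and the left-hand side because $a - \delta \in (0,1)$ forces $(a-\delta)^p < 1$. Taking $p$-th roots, which is again monotone, then produces
\[ (1-(a-\delta)^p)^{1/p} < (1-a^p)^{1/p} + d, \]
which is in fact stronger than the claimed non-strict inequality.

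There is no serious obstacle here: the only points requiring care are the elementary nonnegativity checks that legitimize raising to the $p$-th power and extracting $p$-th roots, all of which follow at once from $a, d \in (0,1)$ together with the positivity of $\sigma(a,d)$ supplied by the preceding lemma. Conceptually, $\sigma(a,d)$ is engineered to be exactly the threshold at which the increasing function $\delta \mapsto (1-(a-\delta)^p)^{1/p}$ meets the target value $(1-a^p)^{1/p}+d$; the content of the lemma is merely that staying strictly below this threshold keeps the expression strictly below the target.
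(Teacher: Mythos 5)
Your proof is correct and follows essentially the same route as the paper's: the paper unwinds $\delta \le \sigma(a,d)$ into $a - \delta \ge (1-((1-a^p)^{1/p}+d)^p)^{1/p}$ and then applies exactly your chain of monotone manipulations (raise to the $p$-th power, rearrange, take $p$-th roots), only with non-strict inequalities where you keep strict ones. The extra nonnegativity bookkeeping you spell out is left implicit in the paper (which only notes $\sigma(a,d) < a$, so that $(a-\delta)^p$ is well-defined), and your claim that $1-((1-a^p)^{1/p}+d)^p \ge 0$ is legitimate only under the reading that this is part of $\sigma(a,d)$ being well-defined at all --- the same tacit assumption the paper itself makes, since for $a,d \in (0,1)$ that quantity can in fact be negative (e.g.\ $p=2$, $a=\tfrac12$, $d=\tfrac34$).
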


\begin{proof}
Clearly $\sigma(a,d) < a$, so $(a-\delta)^p$ is well-defined. Now, since
$$\delta \leq a-(1-((1-a^p)^{1/p} + d)^p )^{1/p},$$
we obtain, successively, that:
$$ a - \delta \geq (1-((1-a^p)^{1/p} + d)^p )^{1/p},$$
$$ (a- \delta)^p \geq 1-((1-a^p)^{1/p} + d)^p,$$
$$ 1 - (a- \delta)^p \leq ((1-a^p)^{1/p} + d)^p,$$
$$ (1 - (a- \delta)^p)^{1/p} \leq (1-a^p)^{1/p} + d.$$
\end{proof}

Note that the statements of Corollary~\ref{coro} and Lemma~\ref{lsigma} are universal and therefore it is admissible to add them as supplementary axioms -- denote them by $C_1$ and $C_2$. These statements, although provable in our higher-typed system, do not contribute to any information that might be extracted out of a proof -- this is why we may use them freely, with no concern for their origin. In addition, they refer only to real numbers, not to any abstract types, therefore their presence does not bias our effort to prove the suitability of our axiomatization. The conclusion of our enterprise, the modulus of uniform convexity, would also transform the convexity statement into such a universal sentence, which {\it concerns} abstract types and which could therefore only {\it afterwards} be added to the system in order to analyze a subsequent proof. That being said, we are now in a position to state the main theorem of this section.

\begin{theorem}\label{uconv}
Provably in the system $\mathcal{A}^\omega[X,\|\cdot\|,\mathcal{C},L^p] + \{2 \leq_{\mathbb{R}} c_p; C_1; C_2\}$, the function $\eta : (0,2] \to (0,\infty)$, defined, for any $\varepsilon >0$, by $\eta(\varepsilon) := 1- (1-(\frac{\varepsilon}2)^p)^{1/p}$, is a modulus of uniform convexity.
\end{theorem}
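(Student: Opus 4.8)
The plan is to reduce the uniform convexity of an arbitrary $L^p(\mu)$ space to the finite-dimensional Euclidean case $\mathbb{R}^d_p$ via the approximation axiom $B$, and then to verify the Euclidean case using the real-analysis lemmas $C_1$ and $C_2$. The target statement asserts that for any $\varepsilon \in (0,2]$ and any $x_1, x_2$ with $\|x_1\|, \|x_2\| \leq 1$ and $\|x_1 - x_2\| \geq \varepsilon$, one has $\|\tfrac12(x_1+x_2)\| \leq 1 - \eta(\varepsilon)$, where $\eta(\varepsilon) = 1 - (1-(\tfrac{\varepsilon}{2})^p)^{1/p}$. Since the whole claim is to be established inside the formal system, I must be careful that every step is carried out using only the listed axioms, but the underlying mathematical content is a standard uniform-convexity estimate specialized to $L^p$.

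First I would fix $\varepsilon$ and vectors $x_1, x_2$ as above and invoke the approximation axiom $B$ with $n = 2$ and an appropriate accuracy parameter $N$: this produces a finite-dimensional subspace $C$ isometric to some $\mathbb{R}^d_p$ together with $y_1, y_2 \in C$ of norm at most $1$ with $\|x_i - y_i\| \leq \tfrac1N$. Under the isometry, $y_1, y_2$ correspond to genuine vectors in $\mathbb{R}^d_p$, so I may reason coordinatewise about their norms. The key computation is pointwise: writing $y_1, y_2$ in coordinates and applying Corollary~\ref{coro} (the axiom $C_1$) to each coordinate pair $(a,b) = (y_1(j), y_2(j))$ and summing over $j$, I obtain the Clarkson-type inequality
$$\left\|\tfrac12(y_1+y_2)\right\|^p + \left\|\tfrac12(y_1-y_2)\right\|^p \leq \tfrac12\left(\|y_1\|^p + \|y_2\|^p\right) \leq 1.$$
From $\|x_1 - x_2\| \geq \varepsilon$ and the approximation I can force $\|y_1 - y_2\|$ to be at least $\varepsilon - \tfrac{2}{N}$, hence $\|\tfrac12(y_1-y_2)\| \geq \tfrac{\varepsilon}{2} - \tfrac1N$, which bounds $\|\tfrac12(y_1+y_2)\|^p$ from above by $1 - (\tfrac{\varepsilon}{2} - \tfrac1N)^p$, giving $\|\tfrac12(y_1+y_2)\| \leq (1-(\tfrac{\varepsilon}{2}-\tfrac1N)^p)^{1/p}$. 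Transferring back to $x_1, x_2$ costs another $\tfrac1N$, so $\|\tfrac12(x_1+x_2)\| \leq (1-(\tfrac{\varepsilon}{2}-\tfrac1N)^p)^{1/p} + \tfrac1N$.

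It remains to absorb the two error terms into the clean bound $1 - \eta(\varepsilon) = (1-(\tfrac{\varepsilon}{2})^p)^{1/p}$, and this is precisely where Lemma~\ref{lsigma} (the axiom $C_2$) enters. The point is that for $a = \tfrac{\varepsilon}{2}$ and a prescribed tolerance $d$, there is a $\delta > 0$ (namely any $\delta < \sigma(a,d)$) such that $(1-(a-\delta)^p)^{1/p} \leq (1-a^p)^{1/p} + d$; so by choosing $N$ large enough that $\tfrac1N < \delta$ and $\tfrac1N < d$, the right-hand side above is controlled by $(1-(\tfrac{\varepsilon}{2})^p)^{1/p} + 2d$, and letting $N \to \infty$ (equivalently, since the bound holds for every $N$, taking the infimum) yields exactly $\|\tfrac12(x_1+x_2)\| \leq 1 - \eta(\varepsilon)$.

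The main obstacle I anticipate is not the real-analysis content but the bookkeeping of doing this \emph{inside} the formal system: the vectors $y_i$ live in $C$ only as formal linear combinations $\sum_i \lambda_i z_i$ of the basis elements $z_i$ supplied by axiom $B$, and the isometry $\psi(m,z)$ guarantees only that the norm of such a combination equals the $p$-norm of its coefficient vector. Thus the coordinatewise application of $C_1$ must be phrased as an inequality about the coefficient tuples $\underline{\lambda}$ of $y_1, y_2$ relative to the common system $\underline{z}$, and I must check that $\tfrac12(y_1\pm y_2)$ are themselves represented over the same $\underline{z}$ so that their norms are computed by the same Euclidean formula. Once the representations are aligned, $C_1$ applies to the coefficient sequences directly and the argument goes through; the passage $N \to \infty$ is legitimate because the conclusion $\|\tfrac12(x_1+x_2)\| \leq 1 - \eta(\varepsilon)$ is a universal ($\forall$-) statement whose negation would be refuted for some finite $N$. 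Managing these representation details cleanly, rather than the quantitative estimate itself, is the delicate part.
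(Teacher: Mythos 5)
Your proposal is correct and follows essentially the same route as the paper's own proof: invoke axiom $B$ with $n=2$ to get approximants $y_1,y_2$ expressed over a common isometric system $\underline{z}$, apply $C_1$ coordinatewise to the coefficient tuples to get the Clarkson-type inequality, absorb the approximation errors via $C_2$ and the function $\sigma$, and conclude from the arbitrariness of the tolerance using the (universal) definition of $\leq_\mathbb{R}$. The only difference is cosmetic bookkeeping: the paper fixes the accuracy $\delta=\min\{\frac{c}{2},\frac{1}{2}\sigma(\frac{\varepsilon}{2},\frac{c}{2})\}$ up front rather than choosing $N$ large at the end, which is the same argument in a different order.
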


\begin{proof}
Let $\varepsilon > 0$. Take $x_1, x_2 \in X$ with $\|x_1\|, \|x_2\| \leq 1$ and $\|x_1-x_2\| \geq \varepsilon$. Let $c \in (0,1)$ such that $c < 1-\left(1-\left(\frac\varepsilon2\right)^p\right)^{1/p}$, so that $\sigma\left(\frac{\varepsilon}2,\frac{c}2\right)$ is well-defined. Set $\delta := \min\left\{ \frac{c}2, \frac{\sigma(\frac{\varepsilon}2,\frac{c}2)}2 \right\}$. Take $y_1, y_2$, $z_1,\ldots,z_m$ like in our axiomatization (e.g., from Table~\ref{tabel-1}) such that for all $k \in \{1,2\}$,
$$\|x_k - y_k\| \leq \delta,\quad \|y_k\|\leq 1.$$
Write now:
$$y_1 = \sum_{i=1}^m \lambda_i z_i,\quad y_2 = \sum_{i=1}^m \mu_i z_i.$$
We have that:
\begin{align*}
\left\|\frac{y_1+y_2}2\right\|^p + \left\|\frac{y_1-y_2}2\right\|^p &= \left\|\sum_{i=1}^m \frac{\lambda_i + \mu_i}2 z_i\right\|^p + \left\|\sum_{i=1}^m \frac{\lambda_i - \mu_i}2 z_i\right\|^p \\
&= \sum_{i=1}^m \left( \left|\frac{\lambda_i+\mu_i}2\right|^p + \left|\frac{\lambda_i-\mu_i}2\right|^p \right) \\
&\leq \frac12 \sum_{i=1}^m (|\lambda_i|^p + |\mu_i|^p) \\
&= \frac12 (\|y_1\|^p + \|y_2\|^p ) \\
&\leq 1.
\end{align*}
Assume that $\|y_1-y_2\| \geq \rho$. Then we get that
$$\left\|\frac{y_1+y_2}2\right\| \leq \left(1-\left(\frac{\rho}2\right)^p\right)^{1/p}.$$
Incidentally, what we have shown above is the validity of $\eta$ as a modulus of uniform convexity for the $\mathbb{R}^m_p$ spaces (with $p \geq 2$).

Note that:
$$\varepsilon \leq \|x_1 - x_2\| \leq \|x_1 - y_1\| + \|y_1 - y_2\| + \|y_2 - x_2\| \leq \|y_1 - y_2\| + 2\delta$$
and hence we may take $\rho := \varepsilon - 2\delta > 0$ (since $\delta < \sigma(\frac{\varepsilon}2,\frac{c}2) < \frac{\varepsilon}2$). We have obtained that:
$$\left\|\frac{y_1+y_2}2 \right\| \leq \left(1-\left(\frac{\varepsilon}2 - \delta\right)^p\right)^{1/p}.$$

On the other hand,
$$\|x_1 + x_2\| \leq \|y_1+y_2\| + \|(x_1+x_2)-(y_1+y_2)\| \leq \|y_1+y_2\| + \|x_1 - y_1\| + \|x_2 - y_2\| \leq \|y_1 + y_2\| + 2\delta,$$
so
$$\left\|\frac{x_1 + x_2}2\right\| \leq \left\|\frac{y_1+y_2}2 \right\| + \delta \leq \left(1-\left(\frac{\varepsilon}2 - \delta\right)^p \right)^{1/p} + \delta.$$

Since $0<\delta < \sigma(\frac\varepsilon2,\frac c 2)$, we have that:
$$\left(1-\left(\frac{\varepsilon}2 - \delta\right)^p \right)^{1/p} \leq \left(1-\left(\frac{\varepsilon}2\right)^p \right)^{1/p} + \frac c 2.$$
Also, we know that $\delta \leq \frac c 2$, so we finally obtain that:
$$\left\|\frac{x_1 + x_2}2\right\| \leq \left(1-\left(\frac{\varepsilon}2\right)^p \right)^{1/p} + c.$$

Now, since $c$ could be chosen arbitrarily close to 0, we may prove in our system that
$$\left\|\frac{x_1 + x_2}2\right\| \leq \left(1-\left(\frac{\varepsilon}2\right)^p \right)^{1/p},$$
showing, indeed, that $\eta$ is a modulus of uniform convexity.
\end{proof}

\section{Acknowledgments}

The author is grateful to Ulrich Kohlenbach and to Lauren\c tiu Leu\c stean for the helpful discussions and suggestions regarding the subject matter and the final form of the paper, and to the anonymous reviewers who pointed out connections to results established in the literature.

This work was supported by a grant of the Romanian National Authority for Scientific Research, CNCS - UEFISCDI, project number PN-II-ID-PCE-2011-3-0383.

\end{document}